\documentclass[a4paper,12pt]{amsart}
\usepackage{amssymb, amsmath, graphicx}
\usepackage[curve]{xypic}
\usepackage{enumerate}
\usepackage{tikz-cd}  
\usepackage[indexonlyfirst,sort=use]{glossaries}
\usepackage{url}
\DeclareRobustCommand{\gobblefour}[4]{}
\newcommand*{\SkipTocEntry}{\addtocontents{toc}{\gobblefour}}

\providecommand{\abs}[1]{\lvert#1\rvert}

\providecommand{\Ker}{\textnormal{Ker}}

\providecommand{\Ker}{\textnormal{Ker}}

\providecommand{\Tor}{\textnormal{Tor}}

\providecommand{\N}{\mathbb{N}}
\providecommand{\Z}{\mathbb{Z}}
\providecommand{\Q}{\mathbb{Q}}
\providecommand{\R}{\mathbb{R}}
\providecommand{\C}{\mathbb{C}}

\tikzset{
	curvarr/.style={
		to path={ -- ([xshift=2ex]\tikztostart.east)
			|- (#1) [near end]\tikztonodes
			-| ([xshift=-2ex]\tikztotarget.west)
			-- (\tikztotarget)}
	}
}

\makeatletter
\newcommand\tint{\mathop{\mathpalette\tb@int{t}}\!\int}
\newcommand\bint{\mathop{\mathpalette\tb@int{b}}\!\int}
\newcommand\tb@int[2]{%
  \sbox\z@{$\m@th#1\int$}%
  \if#2t%
    \rlap{\hbox to\wd\z@{%
      \hfil
      \vrule width .35em height \dimexpr\ht\z@+1.4pt\relax depth -\dimexpr\ht\z@+1pt\relax
      \kern.05em % a small correction on the top
    }}
  \else
    \rlap{\hbox to\wd\z@{%
      \vrule width .35em height -\dimexpr\dp\z@+1pt\relax depth \dimexpr\dp\z@+1.4pt\relax
      \hfil
    }}
  \fi
}
\makeatother

\allowdisplaybreaks

\begin{document}

\title[Introduction to the BSD Conjecture]{Introduction to the Birch and Swinnerton-Dyer Conjecture}
\author{Fabio Ferrari Ruffino}
\address{Departamento de Matem\'atica - Universidade Federal de S\~ao Carlos - Rod.\ Washington Lu\'is, Km 235 - C.P.\ 676 - 13565-905 S\~ao Carlos, SP, Brasil}
\email{ferrariruffino@ufscar.br}

\begin{abstract}
We give a brief and partially informal introduction to the Birch and Swinner\-ton-Dyer conjecture. After a short review on basic Diophantine equations, we define the natural abelian group structure on an elliptic curve, leading to the definition of the corresponding algebraic rank. Afterwards, we sketch how to define the analytic rank of an elliptic curve and we state the conjecture, that is, the equality of these two ranks.
\end{abstract}

\maketitle

\newtheorem{Theorem}{Theorem}[section]
\newtheorem{Lemma}[Theorem]{Lemma}
\newtheorem{Prop}[Theorem]{Proposition}
\newtheorem{Corollary}[Theorem]{Corollary}
\newtheorem{ThmDef}[Theorem]{Theorem - Definition}
\newtheorem*{BSDConj}{Birch and Swinnerton-Dyer Conjecture}

\theoremstyle{definition}
\newtheorem{Rmk}[Theorem]{Remark}
\newtheorem{Rmks}[Theorem]{Remarks}
\newtheorem{Def}[Theorem]{Definition}
\newtheorem{Not}[Theorem]{Notation}

%%%%%%%%%%%%%%%%%%%%%%%%%%%%%%%%%%%%%%%%%%%%%%%%%%%%%%%%%%%%%%%%%%%%%%%%%%%%%%%%%%%%%%%%%%%%%%%

\tableofcontents

%%%%%%%%%%%%%%%%%%%%%%%%%%%%%%%%%%%%%%%%%%%%%%%%%%%%%%%%%%%%%%%%%%%%%%%%%%%%%%%%%%%%%%%%%%%%%%%

\section{Introduction}

The Birch and Swinnerton-Dyer Conjecture concerns rational points on smooth cubic plane curves, thus it is part of the research on third-degree Diophantine equations in two variables (see \cite{Johnson} and \cite[Section 6.6]{ST}). In general, given a curve of this kind, we do not know yet how to establish whether it contains rational points or not, thus we have to assume that there exists at least one. Starting from this hypothesis, it is possible to construct a natural abelian group structure on the set of rational points, that turns out to be finitely generated. The rank of this group is called \emph{algebraic rank} of the cubic. Of course, knowing the value of this rank is very relevant to describe all of the rational points of the curve, therefore it would be important to have a concrete way to compute it. Unfortunately, this is a hard problem in general (there are plenty of open problems about this topic; for example, we neither know if the set of possible algebraic ranks is bounded above or not).

Since the construction of the abelian group is birationally invariant, it actually applies to any projective curve that is birationally equivalent to a smooth rational plane cubic (equivalently, to a smooth projective curve of genus $1$ defined over $\Q$). This extended family includes certain quartic curves that naturally arise as arguments of elliptic integrals. That's way the curves of this family, with a fixed rational point, are called \emph{elliptic curves}. Typically, an elliptic curve is represented concretely as a pair $(\mathfrak{C}, O)$, where $\mathfrak{C}$ is a smooth rational plane cubic in the Weierstrass form $y^{2} = x^{3} + bx + c$, with $4b^{3} + 27c^{2} \neq 0$, and $O$ is a specified rational point.

With the help of complex analysis, we can define another notion of rank, that is easier to compute in various meaningful cases. In fact, for every prime $p$, we consider the reduction modulo $p$ of an elliptic curve. Such reduction is smooth too, except for a finite number of cases that can be easily overcome. For every $p$, we compute the difference between the number of points of the reduced curve and the expected average cardinality. We get a list of integral numbers $\epsilon_{p}$ that can be successfully organized in a Dirichelet Series, called \emph{L-function} of the elliptic curve, of the following form:
	\[L_{\mathfrak{C}}(s) := \prod_{p \nmid \Delta} \Bigl(1 - \frac{\epsilon_{p}}{p^{s}} + \frac{1}{p^{2s-1}}\Bigr)^{-1} \cdot \prod_{p \mid \Delta} \Bigl(1 - \frac{\epsilon_{p}}{p^{s}}\Bigr)^{-1} = \sum_{n=1}^{+\infty} \frac{\epsilon_{n}}{n^{s}}
\]
Here $\Delta$ is the discriminant of the curve, $p \mid \Delta$ means that $p$ is a prime divisor of $\Delta$, and $\epsilon_{n}$ is a suitable extension of the values $\epsilon_{p}$ to not necessarily prime indices. This series can be analytically extended to the whole complex plane. We define the \emph{analytic rank} of the curve as the order of this holomorphic function in the point $1$.

Some ``experimental'' evidences, obtained through computations with the most powerful computers of that time, together with a heuristic theoretical argument, led the two young researchers Birch and Swinnerton-Dyer, working at Cambridge University around 1958, to conjecture that the algebraic and the analytic rank of any elliptic curve coincide. We do not know yet whether they were right or not, but, with more modern computers, we found stronger evidences about this affirmation. Furthermore, if it is true, then it allows us to know the algebraic rank of an elliptic curve by computing its analytic rank, which is easier to calculate in some meaningful situations. Beyond this advantage, some historical problems in number theory, like the \emph{congruent number problem}, would finally find a solution. For these reasons, the Birch and Swinnerton-Dyer Conjecture was included in the list of the Millennium Prize Problems by the Clay Mathematics Institute in 2000, confirming its strong relevance in contemporary mathematics.

%%%%%%%%%%%%%%%%%%%%%%%%%%%%%%%%%%%%%%%%%%%%%%%%%%%%%%%%%%%%%%%%%%%%%%%%%%%%%%%%%%%%%%%%%%%%%%%

\section{Diophantine Equations}

A \emph{Diophantine equation} is a polynomial equation with integral (equivalently, rational) coefficients, whose solutions are by definition integral or rational, depending on the context. Typically, we are interested in knowing both the set of rational solutions and its subset of integral solutions. Therefore, we have four meaningful problems:
\begin{enumerate}
	\item[(P1)] establishing whether there exists a rational solution or not
	\item[(P2)] finding all of the rational solutions, provided that they exist
	\item[(P3)] establishing whether there exists an integral solution or not
	\item[(P4)] finding all of the integral solutions, provided that they exist
\end{enumerate}
When there is only one variable, each of these problems can be solved quite easily in principle. In fact, the equation is of the following form:
\begin{equation}\label{DiophEqOneV}
	a_{n}x^{n} + a_{n-1}x^{n-1} + \cdots + a_{1}x + a_{0} = 0, \quad a_{0}, \ldots, a_{n} \in \Z, \; a_{n} \neq 0
\end{equation}
We can assume without loss of generality that $n > 0$ and $a_{0} \neq 0$. If $\frac{k}{h}$ is a rational solution with $k$ and $h$ coprime, then we have
	\[a_{n}k^{n} + a_{n-1}k^{n-1}h + \cdots + a_{1}kh^{n-1} + a_{0}h^{n} = 0.
\]
It follows that $k \mid a_{0}h^{n}$, thus $k \mid a_{0}$, and $h \mid a_{n}k^{n}$, thus $h \mid a_{n}$. Since $a_{0}$ and $a_{n}$ have finitely many divisors, we have a finite set of possible rational solutions, some of which are integral. Hence, we analyse each element of this set and we verify whether it satisfies the equation or not.

The situation is much more complicated with two or more variables, since, in this case, we have no general method at all. From now on we concentrate on the case of two variables, considering equations of low degree.

\SkipTocEntry \subsection{Linear Equations}

A first degree Diophantine equation is called \emph{linear} (even if the constant term does not vanish). We get an equation of the following form:
\begin{equation}\label{LinDiophEq}
	ax + by + c = 0, \quad a, b, c \in \Z, \; (a, b) \neq (0, 0)
\end{equation}
Equivalently, we seek the rational and integral points of the line $ax + by + c = 0$ in $\R^{2}$. If $b \neq 0$, then the real solutions are the pairs of the form $\bigl(t, -\frac{at+c}{b}\bigr)$; clearly, both entries are rational if and only if $t \in \Q$. The analogous argument holds if $a \neq 0$. This solves problems (P1) and (P2). About (P3) and (P4), we call $d$ the greatest common divisor of $a$ and $b$, which can be computed through the Euclidean division algorithm. If there exists an integral solution, then $d \mid c$. Conversely, let us suppose that $d \mid c$. By the Bezout's identity, there exist $k, h \in \Z$ such that $ak + bh = d$; we can compute a pair $(k, h)$ with this property through back substitution in the Euclidean algorithm. It immediately follows that $(-k\frac{c}{d}, -h\frac{c}{d})$ is an integral solution. In order to find all of the other ones, we have to add the generic solution of the homogeneous equation $ax + by = 0$, that is, $(n\frac{b}{d}, - n\frac{a}{d})$ for any $n \in \Z$. Summarizing:
\begin{enumerate}
	\item[(P1)] There always exist infinitely many rational solutions.
	\item[(P2)] The rational solutions are the pairs of the form $\bigl(t, -\frac{at+c}{b}\bigr)$ or $\bigl(-\frac{bt+c}{a}, t\bigr)$ for any $t \in \Q$.
	\item[(P3)] There exists an integral solution if and only if $d \mid c$, where $d := \textnormal{GCD}(a, b)$ can be computed trough the Euclidean division algorithm.
	\item[(P4)] The integral solutions are the pairs of the form $\bigl(\frac{nb-kc}{d}, -\frac{na+hc}{d}\bigr)$ for any $n \in \Z$, where $k$ and $h$ satisfy the identity $ak + bh = d$; such a pair can be computed through back substitution in the Euclidean algorithm.
\end{enumerate}

\SkipTocEntry \subsection{Equations of Degree $2$}

From now on we focus on problems (P1) and (P2), since they are directly related to the main topic of this review (that is, the Birch and Swinnerton-Dyer Conjecture).

Solving Diophantine equations of degree $2$ in two variables---equivalently, finding the rational points of a given conic---is not as easy as in the linear case, but we have a general algorithm anyway. First of all, not every conic with integral coefficients admits rational points. For example, let us consider the circle $x^{2} + y^{2} = 3$ and, by contradiction, let $\bigl(\frac{k}{l}, \frac{h}{l}\bigr)$ be a rational point. We can suppose without loss of generality that $k$, $h$, and $l$ have no common factors. We get $k^{2} + h^{2} = 3l^{2}$. If $3 \mid k$, then $3 \mid (3l^{2} - k^{2}) = h^{2}$, thus $3 \mid h$ too. It follows that $9 \mid (k^{2} + h^{2}) = 3l^{2}$, thus $3 \mid l$, while $k$, $h$, and $l$ have no common factors by hypothesis. This shows that $3 \nmid k$ and, by the analogous argument, $3 \nmid h$. Therefore, $k = 3r \pm 1$ and $h = 3s \pm 1$, that implies $k^{2} + h^{2} = 3t + 2 \neq 3l^{2}$, a contradiction. Hence, each of the problems raised above is non-trivial, including the first one.

\subsubsection{Problem (P1)} We start from the equation
\begin{equation}\label{DiophConic}
	ax^{2} + bxy + cy^{2} + dx + ey + f = 0, \quad a, \ldots, f \in \Z, \; (a, b, c) \neq (0, 0, 0)
\end{equation}
We suppose that the conic is irreducible, since otherwise the analysis of rational and integral points can be realised without difficulties.\footnote{The only issue to be considered is that a conic with rational coefficients can factorize in two lines with some irrational coefficients (for example, $x^{2} - 2 = 0$ or $2x^{2} - 3y^{2} = 0$). Nevertheless, even in these cases, we can easily solve problems (P1)--(P4).} We consider the corresponding homogeneous equation, that is, we replace $x$ and $y$ respectively by $\frac{x}{u}$ and $\frac{y}{u}$. We get:
\begin{equation}\label{DiophConicHom}
	ax^{2} + bxy + cy^{2} + dxu + eyu + fu^{2} = 0
\end{equation}

\begin{Lemma} The following statements are equivalent:
\begin{enumerate}
	\item[(i)] Equation \eqref{DiophConic} admits a rational solution.
	\item[(ii)] Equation \eqref{DiophConicHom} admits a non-vanishing integral solution.
	\item[(iii)] Equation \eqref{DiophConicHom} admits a non-vanishing rational solution.
\end{enumerate}
If these conditions hold, then \eqref{DiophConicHom} admits an integral solution $(k, h, l)$ such that $l \neq 0$.
\end{Lemma}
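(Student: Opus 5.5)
We prove the cycle (i) $\Rightarrow$ (ii) $\Rightarrow$ (iii) $\Rightarrow$ (i); the final claim comes out of the first step. Throughout we use that the conic is irreducible, as assumed above.

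For (i) $\Rightarrow$ (ii), take a rational solution $(x_{0}, y_{0})$ of \eqref{DiophConic} and write $x_{0} = \frac{k}{l}$, $y_{0} = \frac{h}{l}$ with a common denominator $l \neq 0$. Multiplying \eqref{DiophConic} by $l^{2}$ shows that $(k, h, l)$ is an integral solution of \eqref{DiophConicHom}. It is non-vanishing because $l \neq 0$. This also proves the last statement of the lemma, since the conditions imply (i). The step (ii) $\Rightarrow$ (iii) is trivial.

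The real content is (iii) $\Rightarrow$ (i). Let $(x_{0}, y_{0}, u_{0})$ be a non-vanishing rational solution. If $u_{0} \neq 0$, then $(x_{0}/u_{0}, y_{0}/u_{0})$ solves \eqref{DiophConic}, since \eqref{DiophConicHom} is homogeneous. The obstacle is the case $u_{0} = 0$, a rational point at infinity. Then $(x_{0}, y_{0}) \neq (0,0)$ and $Q(x_{0}, y_{0}) := ax_{0}^{2} + bx_{0}y_{0} + cy_{0}^{2} = 0$.

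To handle this case, intersect the conic with affine lines in the direction $(x_{0}, y_{0})$. Call $F(x,y)$ the left-hand side of \eqref{DiophConic} and set $(x,y) = (p,q) + s(x_{0},y_{0})$ with $p, q \in \Q$. The coefficient of $s^{2}$ is $Q(x_{0},y_{0}) = 0$, so $F$ restricted to the line is linear in $s$:
\[ F(p,q) + s\, L(p,q) = 0, \]
where
\[ L(p,q) = (2ax_{0}+by_{0})p + (bx_{0}+2cy_{0})q + (dx_{0}+ey_{0}). \]
If $L(p,q) \neq 0$ for some rational $(p,q)$, then $s = -F(p,q)/L(p,q)$ is rational and gives a rational solution of \eqref{DiophConic}.

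So it remains to show that $L$ is not identically zero as a polynomial. Denote by $G(x,y,u)$ the left-hand side of \eqref{DiophConicHom}. The three coefficients of $L$ are exactly $\partial_{x}G$, $\partial_{y}G$ and $\partial_{u}G$ evaluated at $(x_{0},y_{0},0)$. If they all vanished, the point $(x_{0}:y_{0}:0)$ would be a singular point of the projective conic. A conic with a singular point is reducible: for instance, every line through the singular point meets it with multiplicity $\geq 3$ if it meets it elsewhere, hence lies on it. This contradicts irreducibility. Therefore some rational $(p,q)$ has $L(p,q) \neq 0$, which yields (i).
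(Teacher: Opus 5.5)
Your proof is correct. It differs from the paper's only in how it treats the one nontrivial case, a rational point $(x_0:y_0:0)$ at infinity.

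The paper changes affine chart. Assuming $x_0 \neq 0$, it dehomogenizes with respect to $x$ to get the conic \eqref{DiophConicBar}, which has the rational point $(y_0/x_0, 0)$ on the line $\bar u = 0$. It then quotes, by forward reference to the solution of (P2), the fact that a conic with one rational point has infinitely many. Since at most two of them lie on $\bar u = 0$, one has $\bar u \neq 0$, and it transports back to an affine rational solution of \eqref{DiophConic}.

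You stay in the original chart and project from the point at infinity itself. Each line parallel to $(x_0,y_0)$ on which $L \neq 0$ meets the affine conic in exactly one point, cut out by a linear equation with rational coefficients. Irreducibility is used exactly once, to make $(x_0:y_0:0)$ non-singular and hence $L \not\equiv 0$. In effect you run the (P2) projection argument directly from the point at infinity, rather than first moving that point into a finite chart and then invoking (P2).

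What each route buys:
\begin{itemize}
\item Yours is self-contained, avoids the forward reference, produces an explicit affine rational point, and shows precisely where the hypothesis enters.
\item The paper's is shorter on the page, but it relies on the same hypothesis implicitly, both for ``infinitely many rational points'' and for ``at most two points with $\bar u = 0$''.
\end{itemize}

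The reducibility you derive from a singular point is over $\C$, since the line through the singular point and another point of the conic need not be rational. This matches the paper's convention in its footnote, and that convention is genuinely needed: $x^2 - 2 = 0$ is irreducible over $\Q$ and has the rational point $(0:1:0)$ at infinity, yet it has no rational affine point.
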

\begin{proof} (i) $\Rightarrow$ (ii) If $\bigl(\frac{k}{l}, \frac{h}{l}\bigr)$ is a rational solution of \eqref{DiophConic}, then $(k, h, l)$ is an integral solution of \eqref{DiophConicHom} such that $l \neq 0$. (ii) $\Rightarrow$ (i) If $(k, h, l)$ is an integral solution of \eqref{DiophConicHom} and $l \neq 0$, then $\bigl(\frac{k}{l}, \frac{h}{l}\bigr)$ is a rational solution of \eqref{DiophConic}. If $l = 0$, then we suppose $k \neq 0$, the argument being analogous if $h \neq 0$. We divide \eqref{DiophConicHom} by $x^{2}$ and we set $\bar{y} := \frac{y}{x}$ and $\bar{u} := \frac{u}{x}$:
\begin{equation}\label{DiophConicBar}\tag{$\star$}
	a + b\bar{y} + c\bar{y}^{2} + d\bar{u} + e\bar{y}\bar{u} + f\bar{u}^{2} = 0
\end{equation}
The pair $\bigl(\frac{h}{k}, 0\bigr)$ is a rational point of \eqref{DiophConicBar}. Solving problem (P2), we will see that, if a conic admits a rational point, then it admits infinitely many. Since there are at most two points of \eqref{DiophConicBar} with $\bar{u} = 0$, there exists a rational point $\bigl(\frac{r}{t}, \frac{s}{t}\bigr)$ with $s \neq 0$. By multiplying by $t^{2}$, we deduce that $(t, r, s)$ is an integral point of \eqref{DiophConicHom} with $s \neq 0$. Hence, $\bigl(\frac{t}{s}, \frac{r}{s}\bigr)$ is a rational solution of \eqref{DiophConic}. (ii) $\Rightarrow$ (iii) Obvious. (iii) $\Rightarrow$ (ii) If $\bigl(\frac{k}{l}, \frac{h}{l}, \frac{r}{l}\bigr)$ is a non-vanishing rational solution, then $(k, h, r)$ is a non-vanishing integral solution.
\end{proof}
If $f = 0$, then $(0, 0, 1)$ is an integral point of \eqref{DiophConic}, thus we suppose $f \neq 0$ and we apply the following algorithm:
\begin{itemize}
	\item We write \eqref{DiophConicHom} in the form
		\[\textstyle f\bigl(u + \frac{d}{2f}x + \frac{e}{2f}y\bigr)^{2} - \frac{d^{2}}{4f^{2}}x^{2} - \frac{e^{2}}{4f^{2}}y^{2} - \frac{de}{2f^{2}}xy + ax^{2} + bxy + cy^{2} = 0.
	\]
	We set $\hat{u} := u + \frac{d}{2f}x + \frac{e}{2f}y$ and we get $f\hat{u}^{2} + \alpha x^{2} + \beta xy + \gamma y^{2} = 0$.
	\item If $\alpha = \gamma = 0$, then $y = -\frac{f\hat{u}^{2}}{\beta x}$ or $x = -\frac{f\hat{u}^{2}}{\beta y}$. In this case, $\bigl(x, -\frac{f\hat{u}^{2}}{\beta x}, 1\bigr)$ is a rational point if and only if $x \in \Q$, and $\bigl(-\frac{f\hat{u}^{2}}{\beta y}, y, 1\bigr)$ is a rational point if and only if $y \in \Q$. Thus, we suppose $\alpha \neq 0$, the argument being analogous if $\gamma \neq 0$.
	\item We get $f\hat{u}^{2} + \alpha(x + \frac{\beta}{2\alpha}y)^{2} - \frac{\beta^{2}}{4\alpha^{2}}y^{2} - \gamma y^{2}$. We set $\hat{x} := x + \frac{\beta}{2\alpha}y$ and we get $f\hat{u}^{2} + \alpha\hat{x}^{2} + \eta y^{2} = 0$. Since the coordinate change $(x, y, u) \mapsto (\hat{x}, y, \hat{u})$ and its inverse are rational, \eqref{DiophConicHom} admits a non vanishing integral solution if and only if $f\hat{u}^{2} + \alpha\hat{x}^{2} + \eta y^{2} = 0$ does. Thus, we reduced the original problem to a homogeneous conic of the form
\begin{equation}\label{DiophConicHomRed}
	ax^{2} + by^{2} + cu^{2} = 0,
\end{equation}
where $a, b, c \in \Z$.
	\item If $abc = 0$, then the conic \eqref{DiophConicHomRed} is reducible, hence we suppose $abc \neq 0$. Moreover, if $a$, $b$, and $c$ have the same sign, then there are no non-vanishing real solutions; thus, we suppose that this does not happen. Lastly, we suppose that $abc$ is square-free. Indeed, we can trivially assume that $a$, $b$, and $c$ have no common factors; also, if for example $a = p^{2}a'$, then we set $\tilde{x} := px$ and we get the equivalent equation $a' \tilde{x}^{2} + b y^{2} + c z = 0$, eliminating the factor $p^{2}$; furthermore, if for example $p \mid a$, $p \mid b$, and $p \nmid c$, then we set $\tilde{z} := \frac{z}{p}$ and we get the equivalent equation $\frac{a}{p} x^{2} + \frac{b}{p} y^{2} + cp \tilde{z}^{2} = 0$, replacing the factor $p^{2}$ in $abc$ with $p$. The following theorem, whose proof can be found in \cite[Theorem 85]{Kumar}, completes the algorithm.
\end{itemize}
\begin{Theorem}[Legendre]\label{LegThm} The homogeneous conic \eqref{DiophConicHomRed}, where:
\begin{itemize}
	\item $abc \neq 0$;
	\item $a$, $b$, and $c$ do not have the same sign; and
	\item $abc$ is square-free
\end{itemize}
admits a non-vanishing integral solution if and only if there exist $k, h, l \in \Z$ such that $-ab \equiv_{c} k^{2}$, $-ac \equiv_{b} h^{2}$, and $-bc \equiv_{a} l^{2}$.
\end{Theorem}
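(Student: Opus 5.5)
Necessity comes first. Suppose $(x_0, y_0, u_0)$ is a non-vanishing integral solution of \eqref{DiophConicHomRed}; we may take it primitive, i.e.\ $\textnormal{GCD}(x_0,y_0,u_0)=1$. Because $abc$ is square-free, the coefficients are pairwise coprime, and a prime $p\mid c$ cannot divide $y_0$. Indeed, if $p\mid y_0$ then $p \mid ax_0^2$, hence $p\mid x_0$. Then $p^2 \mid cu_0^2$, and since $p^2\nmid c$ we get $p\mid u_0$, contradicting primitivity. So $y_0$ is invertible modulo $c$. Multiplying $ax_0^2+by_0^2\equiv_c 0$ by $a$ gives $-ab\,y_0^2\equiv_c (ax_0)^2$, and therefore $-ab\equiv_c (ax_0y_0^{-1})^2$. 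The other two congruences follow by symmetry.

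For sufficiency I would use the classical lattice argument (Minkowski-style pigeonhole), not Legendre's original descent. The first step is to build a linear form that vanishes on the conic modulo each coefficient. Pick $l$ with $l^2\equiv_a -bc$. Since $b$ is invertible modulo $a$, set $\lambda := lb^{-1}$, so that $b\lambda^2\equiv_a -c$. Then $by^2+cu^2\equiv_a b(y-\lambda u)(y+\lambda u)$, and hence $ax^2+by^2+cu^2\equiv_a 0$ whenever $y\equiv_a \lambda u$. Define the analogous linear conditions modulo $b$ and modulo $c$. Because $a,b,c$ are pairwise coprime and square-free, the Chinese remainder theorem shows that the set $L$ of triples $(x,y,u)\in\Z^3$ satisfying all three conditions is a sublattice of index $|abc|$. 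On $L$ we have $ax^2+by^2+cu^2\equiv 0 \pmod{abc}$.

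The second step is the pigeonhole. Consider the triples with $0\le x\le \sqrt{|bc|}$, $0\le y\le\sqrt{|ac|}$, $0\le u\le\sqrt{|ab|}$. There are more than $|abc|$ of them, since not all of $|a|,|b|,|c|$ equal $1$ can give equality; the case $|a|=|b|=|c|=1$ is handled directly, e.g.\ $x^2-y^2=0$ type. So two of them are congruent modulo $L$, and their difference $(x,y,u)\ne 0$ lies in $L$ with $|x|\le\sqrt{|bc|}$, etc. Strictness holds for irrational square roots, and a square equality forces a prime squared to divide $abc$ unless the value is $1$. Then $|ax^2+by^2+cu^2|<2|abc|$ (at most $3|abc|$ bounded in absolute value). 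Using the sign hypothesis, say $a>0$, $b>0$, $c<0$, the value lies strictly between $-|abc|$ and $2|abc|$. Being a multiple of $abc$, it is $0$ or $\pm|abc|$ in the relevant window.

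The main obstacle is the residual case $ax^2+by^2+cu^2=abc$ (with the sign of $abc$ fitting). There I would use the standard trick of producing a new solution from it. Setting $x' = xu+by$, $y' = yu-ax$, $u'=u^2+ab$, one checks that $ax'^2+by'^2+cu'^2=(u^2+ab)(ax^2+by^2+cu^2-abc)\cdot(\ldots)$, which yields $0$. It remains to verify that the resulting triple is nonzero; when $u^2+ab=0$ it is impossible for positive $a,b$, and a small case adjustment handles the other sign arrangements. Tightening the inequalities to exclude $-|abc|$ and $2|abc|$ exactly is routine via square-freeness, so this algebraic identity check is the delicate part.
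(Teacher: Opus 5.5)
The paper gives no proof of this theorem (it only refers to \cite[Theorem 85]{Kumar}), so your argument has to stand on its own. Your necessity argument is correct, and so is the construction of the index-$|abc|$ lattice $L$. The bounding step, however, has a genuine hole. Normalize to $a,b>0>c$. Equality $ax^{2}+by^{2}+cu^{2}=2|abc|$ forces $u=0$, $x^{2}=|bc|$ and $y^{2}=|ac|$, hence $|a|=|b|=|c|=1$, which you dispose of. Equality $ax^{2}+by^{2}+cu^{2}=-|abc|$ only forces $x=y=0$ and $u^{2}=|ab|$, and square-freeness does \emph{not} exclude this when $a=b=1$.

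In that case the equation is $x^{2}+y^{2}=mu^{2}$ with $m=-c>1$. The conditions modulo $a$ and $b$ are vacuous, and the one modulo $c$ reads $x\equiv_{c}\nu y$, so $(0,0,1)\in L$. The triples $(x,y,0)$ and $(x,y,1)$ of your box are therefore always congruent modulo $L$, and the pigeonhole may return exactly $(0,0,\pm1)$. That vector gives the value $c=-|abc|$, which is neither $0$ nor $|abc|$, and your identity trick produces nothing from it. This is not a technicality: it is precisely the assertion that a square-free $m$ for which $-1$ is a square modulo $m$ is a sum of two squares.

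You must treat this case separately. One way is to run the pigeonhole in the plane $u=0$. Among the $(\lfloor\sqrt{m}\rfloor+1)^{2}>m$ pairs with $0\le x,y\le\sqrt{m}$, two are congruent modulo $L$. Their difference is some $(x,y)\neq(0,0)$ with $x\equiv_{m}\nu y$ and $|x|,|y|<\sqrt{m}$ (strict, since $m$ is not a square). Then $0<x^{2}+y^{2}<2m$ and $m\mid x^{2}+y^{2}$, so $x^{2}+y^{2}=m$ and $(x,y,1)$ is a solution. Alternatively, quote Fermat's two-squares theorem.

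Also fix the identity. With $x'=xu+by$, $y'=yu-ax$, $u'=u^{2}+ab$ one has exactly $ax'^{2}+by'^{2}+cu'^{2}=(u^{2}+ab)(ax^{2}+by^{2}+cu^{2}+abc)$, with no extra factor. It therefore vanishes in the residual case $ax^{2}+by^{2}+cu^{2}=|abc|=-abc$, and $u'\ge 1$ gives non-triviality.

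Two smaller points. No adjustment for other sign patterns is needed, because the hypotheses are invariant under permuting the coefficients and under $(a,b,c)\mapsto(-a,-b,-c)$. And your box always has more than $|abc|$ points, since $\lfloor t\rfloor+1>t$. With these repairs your route is the standard Thue--Minkowski lattice proof rather than Legendre's descent.
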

For example, we have seen that the circle $x^{2} + y^{2} - 3 = 0$ does not have rational points. Indeed, with the notation of theorem \ref{LegThm}, we have $a = b = 1$ and $c = -3$. The equivalences $3 \equiv_{1} h^{2}$ and $3 \equiv_{1} l^{2}$ are trivially satisfied for any $h$ and $l$, but the equivalence $-1 \equiv_{3} k^{2}$---that is, $2$ is a square in $\Z_{3}$---is false. On the contrary, the circle $x^{2} + y^{2} - 5 = 0$ admits rational points---for example, $(\pm 2, \pm 1)$ and $(\pm 1, \pm 2)$. Indeed, we have $a = b = 1$ and $c = -5$, and the equivalence $-1 \equiv_{5} k^{2}$---that is, $4$ is a square in $\Z_{5}$---is true.

\subsubsection{Problem (P2)} Now we consider a conic $\mathfrak{C}$ with a fixed rational point $P$ and we show how to find all of the other ones. We proceed as follows:
\begin{itemize}
	\item We fix a line $\mathfrak{L}$ with rational coefficients and parallel to the the tangent line to $\mathfrak{C}$ in $P$, but different from it (that is, $P \notin \mathfrak{L}$).
	\item We get the projection $\pi \colon \mathfrak{C} \setminus \{P\} \to \mathfrak{L}$ defined as follows: for every point $Q \in \mathfrak{C} \setminus \{P\}$, we consider the line passing trough $P$ and $Q$ and we call $R$ the intersection point between this line and $\mathfrak{L}$; we set $\pi(Q) := R$.
	\item This projection is injective and it is possible to prove that the complement of the image contains at most two points. Its inverse is defined as follows: for every point $R$ in the image of $\pi$, we consider the line through $P$ and $R$, that intersects the conic in $P$ itself and in a point $Q$ different from $P$; we have $Q = \pi^{-1}(R)$.
	\item Since $P$ and the coefficients of $\mathfrak{L}$ are rational, the point $Q$ is rational if and only if $\pi(Q)$ is rational. Thus, we solve the problem by computing the rational points of $\mathfrak{L}$.
\end{itemize}
Let us show an easy example. The conic $\mathfrak{C}$ is the unit circle---that is, $x^{2} + y^{2} = 1$---and we fix the rational point $P := (-1, 0)$. The line $\mathfrak{L}$ is $x = 0$, that is parallel to the tangent line to $\mathfrak{C}$ in $P$. We get the projection $\pi \colon \mathfrak{C} \setminus \{P\} \to \mathfrak{L}$ that sends the point $Q = (\cos\theta, \sin\theta)$ to the point $R = (0, \tan\frac{\theta}{2})$, where $-\pi < \theta < \pi$, as shown in figure \ref{ProjCircleLine}. The inverse projection sends $R = (0, t)$ to $Q = \bigl( \frac{1-t^{2}}{1+t^{2}}, \frac{2t}{1+t^{2}} \bigr)$. Since $(0, t)$ is rational if and only if $t \in \Q$, it follows that the rational points of the unit circle are $(-1, 0)$ and the points of the form $\bigl( \frac{1-t^{2}}{1+t^{2}}, \frac{2t}{1+t^{2}} \bigr)$ with $t \in \Q$.

\begin{figure}
\begin{center}
\begin{tikzpicture}
\draw[->] (-70pt, 0pt) -- (70pt, 0pt);
\draw[->] (0pt, -70pt) -- (0pt, 70pt);

\draw[green] (0pt, 0pt) -- (45:60pt);
\draw[green] (-60pt, 0pt) -- (45:60pt);

\draw[green] (15pt, 0pt) arc (0:45:15pt);
\draw[green] (-40pt, 0pt) arc (0:22.5:20pt);

\draw[red, thick] (0pt, -70pt) -- (0pt, 69pt);
\draw[blue, thick] (0pt, 0pt) circle (60pt);

\filldraw[green] (45:60pt) circle (1.5pt);
\filldraw[green] (0pt, 24.85pt) circle (1.5pt);
\filldraw[thick, blue, fill=white] (-60pt, 0pt) circle (1.5pt);

\draw[green] (0pt, 0pt) (45:68.5pt) node {$Q$};
\draw[green] (-7.5pt, 32.5pt) node {$R$};
\draw[green] (22.5:22.5pt) node {$\theta$};
\draw[green] (-60pt, 0pt) + (11.25:27.5pt) node {$\scriptscriptstyle \frac{\theta}{2}$};
\draw[blue] (-67.5pt, 7.5pt) node {$P$};
\end{tikzpicture}
\end{center}
\caption{} \label{ProjCircleLine}
\end{figure}

\SkipTocEntry \subsection{Equations of Degree $3$}

Let us consider a cubic equation in two variables with integral coefficients:
\begin{equation}\label{DiophCubic}
\begin{split}
	ax^{3} + bx^{2}y + cxy^{2} + dy^{3} + ex^{2} + fxy + gy^{2} \, & + hx + iy + j = 0, \\
	& a, \ldots, j \in \Z, \; (a, b, c, d) \neq (0, 0, 0, 0)
\end{split}
\end{equation}
First of all, we need to consider the \emph{projective closure} of this curve, thus we briefly summarise what does this mean.

\subsubsection{Mini-review about the projective plane}

In the affine plane $\R^{2}$, two distinct points induce a unique line, while two distinct lines induce a unique point provided that they are not parallel. The projective plane can be thought of as a completion of $\R^{2}$, in such a way that this asymmetry is eliminated. In fact, we add a ``point at infinity'' for each possible direction of a line, so that two parallel lines intersect in the unique point at infinity that represents their common direction. We can construct a model of the projective plane as follows: we identify $\R^{2}$ with the open unit ball in itself; then, we add the boundary of the ball---that is, the unit circle---and we identify two points in the circle when they are opposite with respect to the origin. In this way, a pair of antipodal points in the circle is a point at infinity. It follows that lines in $\R^{2}$ are represented as in figure \ref{ProjPlane}, where two lines are parallel if and only if they share the same point at infinity. In particular, the diameters of the disc correspond to the lines passing trough the origin. The set of points at infinity---that is, the unit circle with antipodal points identified---turns out to be a line too, that is called \emph{line at infinity}.

\begin{figure}
\begin{center}
\begin{tikzpicture}
\filldraw[thick, blue, fill=cyan!30!white] (0pt, 0pt) circle(50pt);

\draw[thick, magenta] (225:50pt) -- (45:50pt);
\draw[thick, magenta] (225:50pt) .. controls (135:20pt) .. (45:50pt);
\draw[thick, magenta] (225:50pt) .. controls (315:20pt) .. (45:50pt);
\filldraw[magenta] (225:50pt) circle (1.5pt);
\filldraw[magenta] (45:50pt) circle (1.5pt);

\draw[thick, black] (135:50pt) -- (315:50pt);
\draw[thick, black] (135:50pt) .. controls (45:20pt) .. (315:50pt);
\draw[thick, black] (135:50pt) .. controls (225:20pt) .. (315:50pt);
\filldraw[black] (135:50pt) circle (1.5pt);
\filldraw[black] (315:50pt) circle (1.5pt);

\end{tikzpicture}
\end{center}
\caption{} \label{ProjPlane}
\end{figure}

Every algebraic curve in $\R^{2}$ admits its corresponding closure in the projective plane, that is formed by the curve itself and by its points at infinity. In order to compute such points, we have to consider the zeros of the highest-degree homogeneous component of the curve. For example, let us consider the irreducible conics, whose projective closures are shown in figure \ref{ProjConics}. Given the hyperbola $x^{2} - y^{2} - 1 = 0$, we get the homogeneous equation $x^{2} - y^{2} = 0$, whose solutions are the asymptotes $y = x$ and $y = -x$; thus, we get two points at infinity, that is, the directions of the asymptotes. Given the parabola $y - x^{2} = 0$, we get $x^{2} = 0$, thus we obtain the direction the $y$-axis counted twice; this means that there exists one point at infinity, in which the parabola is tangent to the line at infinity. Lastly, given the ellipse $x^{2} + y^{2} - 1 = 0$, we get $x^{2} + y^{2} = 0$, whose only solution is $(0, 0)$; in this case, we have no points at infinity. Analogous computations hold for any irreducible conic.

\begin{figure}
\begin{center}
\begin{tikzpicture}
\filldraw[thick, blue, fill=cyan!30!white] (0pt, 0pt) circle(50pt);

\draw[thick, magenta] (225:50pt) -- (45:50pt);
\draw[thick, black] (135:50pt) -- (315:50pt);
\draw[thick, orange] (135:50pt) .. controls (180:15pt) .. (225:50pt);
\draw[thick, orange] (45:50pt) .. controls (0:15pt) .. (315:50pt);

\filldraw[thick, magenta, fill=orange] (225:50pt) circle (1.5pt);
\filldraw[thick, magenta, fill=orange] (45:50pt) circle (1.5pt);
\filldraw[thick, black, fill=orange] (135:50pt) circle (1.5pt);
\filldraw[thick, black, fill=orange] (315:50pt) circle (1.5pt);

\begin{scope}[xshift=150pt]
\filldraw[thick, blue, fill=cyan!30!white] (0pt, 0pt) circle(50pt);
\draw[thick, orange] (20pt, 0pt) ellipse (30pt and 15pt);
\filldraw[orange] (50pt, 0pt) circle (1.5pt);
\filldraw[orange] (-50pt, 0pt) circle (1.5pt);
\end{scope}

\begin{scope}[xshift=300pt]
\filldraw[thick, blue, fill=cyan!30!white] (0pt, 0pt) circle(50pt);
\draw[thick, orange] (0pt, 0pt) ellipse (30pt and 15pt);
\end{scope}

\end{tikzpicture}
\end{center}
\caption{} \label{ProjConics}
\end{figure}

Thinking of $\R^{2}$ as an affine space, the corresponding automorphisms are affine transformations, that is, compositions between linear bijections and translations. We can suitably define the concept of automorphism of the projective plane, that we call \emph{projective transformation}. The automorphisms of $\R^{2}$ extend to projective transformations that fix the line at infinity. Nevertheless, in general, the line at infinity can be interchanged with another line trough a projective transformation. It follows that, if we take an ellipse and we apply a transformation in such a way that the new line at infinity is tangent to the ellipse, then we get a parabola; similarly, if the new line at infinity intersects the ellipse in two points, then we get a hyperbola. Hence, in the projective framework, we have essentially one irreducible conic.

\subsubsection{Weierstrass normal form}

Let us come back to the generic plane cubic curve with integral coefficients \eqref{DiophCubic}, that we call $\mathfrak{C}$. We denote by $\bar{\mathfrak{C}}$ its projective closure. We suppose that $O$ is a \emph{smooth rational} point of $\bar{\mathfrak{C}}$. In this case, there always exists a change of variables (see section \ref{EllipticCurvesSec} below for more details) that sends $O$ to the point at infinity of the $y$-axis (that is, the point at infinity corresponding to the vertical direction) and takes the affine curve $\mathfrak{C}$ to the following form, called \emph{Weierstrass normal form}:
\begin{equation}\label{WNForm}
	y^{2} = x^{3} + bx + c, \quad b, c \in \Z
\end{equation}
In fact, we argue in the following way, represented in figure \ref{StepToWeierstrass} (see \cite[Section 1.3]{ST}): we choose the line at infinity as the tangent line to $\bar{\mathfrak{C}}$ in $O$; we call $P$ the third intersection point between the line at infinity and $\bar{\mathfrak{C}}$ (which necessarily exists), and we choose as $y$-axis the tangent line to $\bar{\mathfrak{C}}$ in $P$; lastly, we choose as $x$-axis as a line \emph{not} at infinity through $O$. In equation \eqref{DiophCubic}, the third-degree component $ax^{3} + bx^{2}y + cxy^{2} + dy^{3}$ represents the intersection with the line at infinity. Since we have two intersections with $y = 0$ and one intersection with $x = 0$, only the term $cxy^{2}$ is present, that is, $a = b = d = 0$. Thus, if we set $x = 0$, we get $gy^{2} + iy + j$. In this case, the intersection with the line at infinity can be obtained by completing to a third-degree equation through a new variable $u$---that is, $gy^{2}u + iyu^{2} + ju^{3} = 0$---and setting $u = 0$. Since such intersection is double (in $P$) by construction, we have $g = 0$. Thus, equation \eqref{DiophCubic} becomes
	\[cxy^{2} + ex^{2} + fxy + hx + iy + j = 0
\]
with $c \neq 0$. We divide by $c$ and we get $xy^{2} + (f'x + i')y = e'x^{2} + h'x + j'$. We multiply both sides by $x$ and we set $\bar{x} := x$ and $\bar{y} := xy$. We get\footnote{Multiplying both sides by $x$, we add the line $x = 0$ to the curve, but the coordinate change $\bar{x} := x$ and $\bar{y} := xy$ shrinks that line to the origin.}
	\[\bar{y}^{2} + (f'\bar{x} + i')\bar{y} = e'\bar{x}^{3} + h'\bar{x}^{2} + j'\bar{x}.
\]
We complete the square on the l.h.s.\ as follows:
	\[\textstyle \bigl(\bar{y} + \frac{1}{2}(f'\bar{x} + i')\bigr)^{2} = e'\bar{x}^{3} + h'\bar{x}^{2} + j'\bar{x} + \frac{1}{4}(f'\bar{x} + i')^{2}
\]
Multiplying both sides by ${e'}^{2}$ and applying the substitution $\hat{y} := e'\bigl(\bar{y} + \frac{1}{2}(f'\bar{x} + i')\bigr)$ and $\hat{x} := e'\bar{x}$, we get
	\[\hat{y}^{2} = \hat{x}^{3} + \alpha\hat{x}^{2} + \beta\hat{x} + \gamma.
\]
Lastly, we set $y := \hat{y}$ and $x := \hat{x} + \alpha$, obtaining \eqref{WNForm}.
\begin{figure}
\begin{center}
\begin{tikzpicture}
\filldraw[thick, blue, fill=cyan!30!white] (0pt, 0pt) circle(50pt);
\draw[thick, black] (-50pt, 0pt) -- (50pt, 0pt);
\draw[thick, black] (0pt, -50pt) -- (0pt, 50pt);
\draw[thick, orange] (50pt, 0pt) arc(0:45:25pt);
\draw[thick, orange] (50pt, 0pt) arc(0:-45:25pt);
\draw[thick, orange] (0pt, 50pt) arc(0:-45:25pt);
\draw[thick, orange] (0pt, -50pt) arc(180:135:25pt);
\filldraw[orange] (-50pt, 0pt) circle (1.5pt) node[left]{$O$};
\filldraw[orange] (50pt, 0pt) circle (1.5pt) node[right]{$O$};
\filldraw[orange] (0pt, -50pt) circle (1.5pt) node[below]{$P$};
\filldraw[orange] (0pt, 50pt) circle (1.5pt) node[above]{$P$};
\end{tikzpicture}
\end{center}
\caption{} \label{StepToWeierstrass}
\end{figure}

This normal form has been obtained under the assumption that the curve admits a smooth rational point, but we are going to show that this is not a restrictive assumption for our purposes here. In fact, we have three possibilities for a generic plane cubic $\bar{\mathfrak{C}}$: it can be (i) reducible, (ii) irreducible and singular, or (iii) smooth. In case (i), we essentially come back to the case of lines and conics;\footnote{Again, we have to consider that a cubic with rational coefficients can factorize in lines or conics with some irrational coefficients, but we can deal with this issue quite easily.} case (ii) can be analysed with a technique similar to the one used about conics, as the next section shows; and case (iii) will be the heart of our discussion. 

\subsubsection{Singular irreducible cubics}

A singular cubic has only one singular point (since, otherwise, a line through two distinct singularities would intersect the cubic at least four times, a contradiction). In this case, one can prove that the singularity is necessarily a rational point, thus problem (P1) has always a positive answer. Moreover, since we are assuming $\bar{\mathfrak{C}}$ irreducible, it always has a smooth rational point. Indeed, we fix a line $\mathfrak{L}$ with rational coefficients that intersects $\bar{\mathfrak{C}}$ twice (not more) in the singularity. In this case, there exists a third intersection between $\bar{\mathfrak{C}}$ and $\mathfrak{L}$, which is a smooth rational point of $\bar{\mathfrak{C}}$.

It follows that we can express $\mathfrak{C}$ in the normal form \eqref{WNForm}. In this setting, $\bar{\mathfrak{C}}$ is singular if and only if the polynomial $p(x) = x^{3} + bx + c$ has at least one repeated (real) root. Thus, $y^{2} = (x - \alpha)^{2}(x - \beta)$ with $\alpha, \beta \in \R$. Up to a translation, we suppose $\alpha = 0$. We get $y^{2} = x^{2}(x - \beta)$ with $\beta \in \R$. There are three possible shapes for a curve in this family, corresponding to the cases $\beta < 0$, $\beta = 0$, and $\beta > 0$, represented in figure \ref{SingCubics}. The projective closures of these curves intersect the line at infinity only in the direction of the $y$-axis, therefore we get the shapes represented in figure \ref{SingCubicsProj}.

\begin{figure}
\begin{center}
\begin{tikzpicture}
\draw[->] (-60pt, 0pt) -- (60pt, 0pt);
\draw[->] (0pt, -60pt) -- (0pt, 60pt);
\draw[thick, blue] (0pt, 0pt) .. controls (15pt, 15pt) and (35pt, 40pt) .. (45pt, 60pt);
\draw[thick, blue] (0pt, 0pt) .. controls (15pt, -15pt) and (35pt, -40pt) .. (45pt, -60pt);
\draw[thick, blue] (0pt, 0pt) .. controls (-10pt, 15pt) and (-47.5pt, 20pt) .. (-50pt, 0pt);
\draw[thick, blue] (0pt, 0pt) .. controls (-10pt, -15pt) and (-47.5pt, -20pt) .. (-50pt, 0pt);
\filldraw[blue] (-50pt, 0pt) circle (1.5pt);
\filldraw[blue] (0pt, 0pt) circle (1.5pt);
\draw[blue] (-57pt, -10pt) node {$\beta$};

\begin{scope}[xshift=150pt]
\draw[->] (-60pt, 0pt) -- (60pt, 0pt);
\draw[->] (0pt, -60pt) -- (0pt, 60pt);
\draw[thick, blue] (0pt, 0pt) .. controls (15pt, 10pt) and (35pt, 40pt) .. (45pt, 60pt);
\draw[thick, blue] (0pt, 0pt) .. controls (15pt, -10pt) and (35pt, -40pt) .. (45pt, -60pt);
\filldraw[blue] (0pt, 0pt) circle (1.5pt);
\draw[blue] (-7pt, -10pt) node {$\beta$};
\end{scope}

\begin{scope}[xshift=300pt]
\draw[->] (-60pt, 0pt) -- (60pt, 0pt);
\draw[->] (0pt, -60pt) -- (0pt, 60pt);
\filldraw[blue] (0pt, 0pt) circle (1.5pt);
\draw[thick, blue] (40pt, 0pt) .. controls (42.5pt, 30pt) and (55pt, 40pt) .. (60pt, 60pt);
\draw[thick, blue] (40pt, 0pt) .. controls (42.5pt, -30pt) and (55pt, -40pt) .. (60pt, -60pt);
\filldraw[blue] (40pt, 0pt) circle (1.5pt);
\draw[blue] (33pt, -10pt) node {$\beta$};
\end{scope}

\end{tikzpicture}
\end{center}
\caption{} \label{SingCubics}
\end{figure}

\begin{figure}
\begin{center}
\begin{tikzpicture}
\filldraw[thick, blue, fill=cyan!30!white] (0pt, 0pt) circle(50pt);

\draw[thick, black] (0pt, -50pt) -- (0pt, 50pt);
\draw[thick, orange] (0pt, 0pt) .. controls (15pt, 10pt) and (15pt, 40pt) .. (0pt, 50pt);
\draw[thick, orange] (0pt, 0pt) .. controls (15pt, -10pt) and (15pt, -40pt) .. (0pt, -50pt);
\draw[thick, orange] (0pt, 0pt) .. controls (-5pt, 10pt) and (-30pt, 15pt) .. (-35pt, 0pt);
\draw[thick, orange] (0pt, 0pt) .. controls (-5pt, -10pt) and (-30pt, -15pt) .. (-35pt, 0pt);
\filldraw[orange] (0pt, 0pt) circle (1.5pt);
\filldraw[orange] (-35pt, 0pt) circle (1.5pt);
\filldraw[thick, black, fill=orange] (0pt, 50pt) circle (1.5pt);
\filldraw[thick, black, fill=orange] (0pt, -50pt) circle (1.5pt);

\begin{scope}[xshift=150pt]
\filldraw[thick, blue, fill=cyan!30!white] (0pt, 0pt) circle(50pt);

\draw[thick, black] (0pt, -50pt) -- (0pt, 50pt);
\draw[thick, orange] (0pt, 0pt) .. controls (15pt, 5pt) and (15pt, 40pt) .. (0pt, 50pt);
\draw[thick, orange] (0pt, 0pt) .. controls (15pt, -5pt) and (15pt, -40pt) .. (0pt, -50pt);
\filldraw[orange] (0pt, 0pt) circle (1.5pt);
\filldraw[thick, black, fill=orange] (0pt, 50pt) circle (1.5pt);
\filldraw[thick, black, fill=orange] (0pt, -50pt) circle (1.5pt);
\end{scope}

\begin{scope}[xshift=300pt]
\filldraw[thick, blue, fill=cyan!30!white] (0pt, 0pt) circle(50pt);

\draw[thick, black] (0pt, -50pt) -- (0pt, 50pt);
\draw[thick, orange] (15pt, 0pt) .. controls (17.5pt, 15pt) and (20pt, 20pt) .. (22.5pt, 30pt) .. controls (22.5pt, 40pt) and (10pt, 45pt) .. (0pt, 50pt);
\draw[thick, orange] (15pt, 0pt) .. controls (17.5pt, -15pt) and (20pt, -20pt) .. (22.5pt, -30pt) .. controls (22.5pt, -40pt) and (10pt, -45pt) .. (0pt, -50pt);
\filldraw[orange] (0pt, 0pt) circle (1.5pt);
\filldraw[orange] (15pt, 0pt) circle (1.5pt);
\filldraw[thick, black, fill=orange] (0pt, 50pt) circle (1.5pt);
\filldraw[thick, black, fill=orange] (0pt, -50pt) circle (1.5pt);
\end{scope}

\end{tikzpicture}
\end{center}
\caption{} \label{SingCubicsProj}
\end{figure}

In order to solve problem (P2), we argue similarly to the case of conics:
\begin{itemize}
	\item We call $\mathfrak{L}$ the vertical line $x = 1$.
	\item We get the projection $\pi \colon \mathfrak{C} \setminus \{(0, 0)\} \to \mathfrak{L}$ defined as follows: for every point $Q \in \mathfrak{C}$ different from the origin (that is, from the singularity), we consider the line passing trough the origin and $Q$; this line is not vertical, thus it intersects $\mathfrak{L}$ in a unique point $R$; we set $\pi(Q) := R$.
	\item This projection is bijective, except for the curve $y^{2} = x^{3}$, in which we have to remove $(1, 0)$ from $\mathfrak{L}$ because the $x$-axis intersects $\mathfrak{C}$ in the origin three times. The inverse is defined as follows: for every point $R$ in the image of $\pi$, we consider the line through the origin and $R$, that intersects the conic twice in the origin itself and another time in a point $Q$; we have $Q = \pi^{-1}(R)$.
	\item The point $Q$ is rational if and only if $\pi(Q)$ is rational. Thus, the rational points of $\mathfrak{C}$ are the ones of the form $\pi^{-1}(1, q)$ with $q \in \Q$.
\end{itemize}
We can easily compute the point $\pi^{-1}(1, q)$. In fact, this point belongs to the line passing through the origin and $(1, q)$, which is $y = qx$. Hence, we just have to impose the condition $y = qx$ within the equation of the curve. For example, if we consider $y^{2} = x^{3}$, by imposing $y = qx$ we get $(qx)^{2} = x^{3}$, that is, $x = q^{2}$, thus $y = q^{3}$. It follows that the rational points of this curve are the ones of the form $(q^{2}, q^{3})$ for any $q \in \Q$.

\subsubsection{Smooth cubics with a rational point}

In the case of a smooth plane cubic curve $\mathfrak{C}$, problem (P1) is still open, that is, we do not have a general technique to establish whether \eqref{DiophCubic} admits a rational point or not. Thus, in this framework, we assume that there exists a rational point (necessarily smooth) and we concentrate on problem (P2). Under this assumption, we can express $\mathfrak{C}$ in the Weierstrass normal form \eqref{WNForm}, with $4b^{2} + 27c^{2} \neq 0$. The polynomial $p(x) = x^{3} + bx + c$ has three distinct complex roots, therefore we have two possibilities: (i) $y^{2} = (x-\alpha)(x-\beta)(x-\gamma)$, where $\alpha, \beta, \gamma \in \R$ are distinct; and (ii) $y^{2} = (x-\alpha)(x-z)(x-\bar{z})$, where $\alpha \in \R$ and $z \in \C \setminus \R$. We get the two shapes shown in figure \ref{SmoothCubics}. Again, the corresponding projective closures intersect the line at infinity only in the direction of the $y$-axis, therefore we get the shapes represented in figure \ref{SmoothCubicsProj}.

In this case, we cannot project the curve (or the complement of a point) into a line. Indeed, given a point $P$ of the cubic and a point $R$ of a fixed line, the line through $P$ and $R$ intersects the cubic twice, while in the cases of conics and singular cubics we had only one intersection (and for this reason $\pi$ was injective). Hence, we have to analyse smooth cubics in a different way.

\begin{figure}
\begin{center}
\begin{tikzpicture}
\draw[->] (-60pt, 0pt) -- (60pt, 0pt);
\draw[->] (0pt, -60pt) -- (0pt, 60pt);

\draw[thick, blue] (10pt, 0pt) .. controls (7.5pt, 25pt) and (-37.5pt, 25pt) .. (-40pt, 0pt);
\draw[thick, blue] (10pt, 0pt) .. controls (7.5pt, -25pt) and (-37.5pt, -25pt) .. (-40pt, 0pt);
\filldraw[blue] (10pt, 0pt) circle (1.5pt);
\filldraw[blue] (-40pt, 0pt) circle (1.5pt);
\draw[blue] (-47pt, -10pt) node {$\alpha$};
\draw[blue] (17pt, -10pt) node {$\beta$};

\draw[thick, blue] (40pt, 0pt) .. controls (42.5pt, 30pt) and (55pt, 40pt) .. (60pt, 60pt);
\draw[thick, blue] (40pt, 0pt) .. controls (42.5pt, -30pt) and (55pt, -40pt) .. (60pt, -60pt);
\filldraw[blue] (40pt, 0pt) circle (1.5pt);
\draw[blue] (47pt, -9pt) node {$\gamma$};

\begin{scope}[xshift=200pt]
\draw[->] (-60pt, 0pt) -- (60pt, 0pt);
\draw[->] (0pt, -60pt) -- (0pt, 60pt);
\draw[thick, blue] (10pt, 0pt) .. controls (12.5pt, 20pt) and (40pt, 40pt) .. (50pt, 60pt);
\draw[thick, blue] (10pt, 0pt) .. controls (12.5pt, -20pt) and (40pt, -40pt) .. (50pt, -60pt);
\filldraw[blue] (10pt, 0pt) circle (1.5pt);
\draw[blue] (5pt, -9pt) node {$\alpha$};
\end{scope}

\end{tikzpicture}
\end{center}
\caption{} \label{SmoothCubics}
\end{figure}

\begin{figure}
\begin{center}
\begin{tikzpicture}
\filldraw[thick, blue, fill=cyan!30!white] (0pt, 0pt) circle(50pt);

\draw[thick, black] (0pt, -50pt) -- (0pt, 50pt);
\draw[thick, orange] (10pt, 0pt) .. controls (5pt, 15pt) and (-30pt, 15pt) .. (-35pt, 0pt);
\draw[thick, orange] (10pt, 0pt) .. controls (5pt, -15pt) and (-30pt, -15pt) .. (-35pt, 0pt);
\draw[thick, orange] (25pt, 0pt) .. controls (27.5pt, 12.5pt) and (30pt, 17.5pt) .. (32.5pt, 25pt) .. controls (32.5pt, 35pt) and (15pt, 45pt) .. (0pt, 50pt);
\draw[thick, orange] (25pt, 0pt) .. controls (27.5pt, -12.5pt) and (30pt, -17.5pt) .. (32.5pt, -25pt) .. controls (32.5pt, -35pt) and (15pt, -45pt) .. (0pt, -50pt);
\filldraw[orange] (10pt, 0pt) circle (1.5pt);
\filldraw[orange] (-35pt, 0pt) circle (1.5pt);
\filldraw[orange] (25pt, 0pt) circle (1.5pt);
\filldraw[thick, black, fill=orange] (0pt, 50pt) circle (1.5pt);
\filldraw[thick, black, fill=orange] (0pt, -50pt) circle (1.5pt);

\begin{scope}[xshift=200pt]
\filldraw[thick, blue, fill=cyan!30!white] (0pt, 0pt) circle(50pt);

\draw[thick, black] (0pt, -50pt) -- (0pt, 50pt);
\draw[thick, orange] (15pt, 0pt) .. controls (17.5pt, 15pt) and (20pt, 20pt) .. (22.5pt, 30pt) .. controls (22.5pt, 40pt) and (10pt, 45pt) .. (0pt, 50pt);
\draw[thick, orange] (15pt, 0pt) .. controls (17.5pt, -15pt) and (20pt, -20pt) .. (22.5pt, -30pt) .. controls (22.5pt, -40pt) and (10pt, -45pt) .. (0pt, -50pt);
\filldraw[orange] (15pt, 0pt) circle (1.5pt);
\filldraw[thick, black, fill=orange] (0pt, 50pt) circle (1.5pt);
\filldraw[thick, black, fill=orange] (0pt, -50pt) circle (1.5pt);
\end{scope}

\end{tikzpicture}
\end{center}
\caption{} \label{SmoothCubicsProj}
\end{figure}

\section{The Abelian Group Structure}

Let us consider a smooth plane cubic $\mathfrak{C}$ and its projective closure $\bar{\mathfrak{C}}$. Given two points $P, Q \in \bar{\mathfrak{C}}$, we define a third point $R$ as follows. If $P \neq Q$, then we fix the line $\mathfrak{L}$ passing through them; otherwise, we call $\mathfrak{L}$ the tangent line to $\bar{\mathfrak{C}}$ in $P$, that is well-defined since $\bar{\mathfrak{C}}$ is smooth. The line $\mathfrak{L}$ intersects $\bar{\mathfrak{C}}$ in another point $R$. This would be false in general in the affine plane $\R^{2}$ (for example, the vertical line $x = \frac{1}{2}$ intersects the smooth cubic $y^{2} = x(x-1)(x-2)$ only in two distinct points), but it is true in the projective framework (in the previous example, the third intersection point is the point at infinity of the $y$-axis).

We observe that, if $P \neq Q$, then we have two possibilities: if $\mathfrak{L}$ is tangent to $\bar{\mathfrak{C}}$ in $P$ or $Q$, then $R$ coincides respectively with $P$ or $Q$; otherwise, it is a third distinct point. Similarly, if $P = Q$, then we have two possibilities: if $\mathfrak{L}$ intersects three times $\bar{\mathfrak{C}}$ in $P$, then $P = Q = R$; otherwise, $P = Q \neq R$.

In this way, we get the operation $* \, \colon \bar{\mathfrak{C}} \times \bar{\mathfrak{C}} \to \bar{\mathfrak{C}}$, $(P, Q) \mapsto R$. Thus, we denote the point $R$ by $P * Q$. This operation is clearly commutative, but not associative in general. Nevertheless, we can use it to construct a more interesting operation, defined as follows: we fix arbitrarily a point $O \in \bar{\mathfrak{C}}$ and we set $P + Q := O * (P * Q)$. We get the operation $+ \, \colon \bar{\mathfrak{C}} \times \bar{\mathfrak{C}} \to \bar{\mathfrak{C}}$, represented in figure \ref{SumCubic}, that induces an \emph{abelian group} structure on $\bar{\mathfrak{C}}$.

\begin{figure}
\begin{center}
\begin{tikzpicture}
\draw[->] (-60pt, 0pt) -- (60pt, 0pt);
\draw[->] (0pt, -60pt) -- (0pt, 60pt);

\draw[thick, blue] (10pt, 0pt) .. controls (7.5pt, 25pt) and (-37.5pt, 25pt) .. (-40pt, 0pt);
\draw[thick, blue] (10pt, 0pt) .. controls (7.5pt, -25pt) and (-37.5pt, -25pt) .. (-40pt, 0pt);
\draw[thick, blue] (40pt, 0pt) .. controls (42.5pt, 30pt) and (55pt, 40pt) .. (60pt, 60pt);
\draw[thick, blue] (40pt, 0pt) .. controls (42.5pt, -30pt) and (55pt, -40pt) .. (60pt, -60pt);

\draw[orange] (-60pt, -25pt) -- (60pt, 40pt);
\draw[magenta] (-60pt, 10.5pt) -- (60pt, 10.5pt);

\filldraw[orange] (-35pt, -11.5pt) circle (1.5pt);
\filldraw[orange] (49.5pt, 34.5pt) circle (1.5pt);
\filldraw[magenta] (5.5pt, 10.5pt) circle (1.5pt);
\filldraw[magenta] (-35.5pt, 10.5pt) circle (1.5pt);
\filldraw[green] (41.5pt, 10.5pt) circle (1.5pt);

\draw[orange] (-36pt, -18.5pt) node {$\scriptstyle P$};
\draw[orange] (47pt, 41pt) node {$\scriptstyle Q$};
\draw[magenta] (-38pt, 16pt) node {$\scriptstyle O$};
\draw[green] (55pt, 16pt) node {$\scriptstyle P+Q$};
\end{tikzpicture}
\end{center}
\caption{} \label{SumCubic}
\end{figure}

It is not complicated to prove that this sum is associative, but we omit the details here. The fixed point $O$ turns out to be the zero of the group. In fact, by definition we have $O + P = O * (O * P)$. The line $\mathfrak{L}$ through $O$ and $P$ intersects the cubic in the point $Q := O * P$. Now we have to consider the line through $O$ and $Q$, that is $\mathfrak{L}$ itself. Thus, the third intersection point, beyond $O$ and $Q$, is $P$. This shows that $O + P = P$. Similarly, let us show that $-P = P * (O * O)$. Indeed, let us compute $P + (-P) = O * (P * (-P)) = O * (P * (P * (O * O)))$. We call $\mathfrak{L}$ the tangent line to $\bar{\mathfrak{C}}$ in $O$ and we call $Q$ the third intersection point between $\mathfrak{L}$ and $\bar{\mathfrak{C}}$. Moreover, we call $\mathfrak{L}'$ the line through $P$ and $Q$ and we already called $-P$ the the third intersection point between $\mathfrak{L}'$ and $\bar{\mathfrak{C}}$. Now we have to consider the line through $P$ and $-P$, that is $\mathfrak{L}'$ itself. Thus, the third intersection point is $Q$. Lastly, we have to consider the line through $O$ and $Q$, that is $\mathfrak{L}$. It follows that the third intersection point is $O$ (since $\mathfrak{L}$ is tangent there), thus $P + (-P) = O$.

When the cubic is expressed in the Weierstrass normal form \eqref{WNForm}, it is common to fix $O$ as the point at infinity of the $y$-axis. In this case, in order to compute $P + Q$, that is, $O * (P * Q)$, we consider the vertical line through $P * Q$ and we take the other intersection point with $\bar{\mathfrak{C}}$. Such intersection is the reflection of $P * Q$ with respect to the $x$-axis, as figure \ref{SumCubicWeierstress} shows.

\begin{figure}
\begin{center}
\begin{tikzpicture}
\draw[->] (-60pt, 0pt) -- (60pt, 0pt);
\draw[->] (0pt, -60pt) -- (0pt, 60pt);

\draw[thick, blue] (10pt, 0pt) .. controls (7.5pt, 25pt) and (-37.5pt, 25pt) .. (-40pt, 0pt);
\draw[thick, blue] (10pt, 0pt) .. controls (7.5pt, -25pt) and (-37.5pt, -25pt) .. (-40pt, 0pt);
\draw[thick, blue] (40pt, 0pt) .. controls (42.5pt, 30pt) and (55pt, 40pt) .. (60pt, 60pt);
\draw[thick, blue] (40pt, 0pt) .. controls (42.5pt, -30pt) and (55pt, -40pt) .. (60pt, -60pt);

\draw[orange] (-60pt, -25pt) -- (60pt, 40pt);
\draw[magenta] (5.5pt, 60pt) -- (5.5pt, -60pt);

\filldraw[orange] (-35pt, -11.5pt) circle (1.5pt);
\filldraw[orange] (49.5pt, 34.5pt) circle (1.5pt);
\filldraw[magenta] (5.5pt, 10.5pt) circle (1.5pt);
\filldraw[green] (5.5pt, -10.5pt) circle (1.5pt);

\draw[orange] (-36pt, -18.5pt) node {$\scriptstyle P$};
\draw[orange] (47pt, 41pt) node {$\scriptstyle Q$};
\draw[green] (20pt, -10.5pt) node {$\scriptstyle P+Q$};
\end{tikzpicture}
\end{center}
\caption{} \label{SumCubicWeierstress}
\end{figure}

Now we suppose that $P$ and $Q$ are rational points of $\bar{\mathfrak{C}}$. In this case, the line $\mathfrak{L}$ through $P$ and $Q$ is rational, and it is quite easy to prove that the third intersection point between $\mathfrak{L}$ and $\bar{\mathfrak{C}}$ is rational too. Thus, if the subset of rational points of $\bar{\mathfrak{C}}$ is not empty, then it forms a subgroup, that we denote by $(\bar{\mathfrak{C}}_{\,\Q}, +)$. The following fundamental theorem holds.

\begin{Theorem}[Mordell] For any smooth rational plane cubic $\mathfrak{C}$, if the subset of rational points of $\bar{\mathfrak{C}}$ is not empty, then the abelian group $(\bar{\mathfrak{C}}_{\,\Q}, +)$ is finitely generated.
\end{Theorem}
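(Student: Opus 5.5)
The plan follows the classical descent argument, in the form presented in \cite[Chapter III]{ST}. First I reduce to Weierstrass form. Since $\bar{\mathfrak{C}}$ has a rational point, which is automatically smooth, the change of variables described above puts $\mathfrak{C}$ in the form \eqref{WNForm} with $O$ the point at infinity of the $y$-axis. This change is birational with rational coefficients, and it respects the chord--tangent construction, so it induces a group isomorphism on rational points. I will also assume, to simplify the exposition, that $p(x)=x^{3}+bx+c$ has a rational root, which after a translation is $0$. The curve then reads $y^{2}=x^{3}+ax^{2}+bx$ with $T=(0,0)$ a rational point of order $2$. The general case can be handled by the same method over the number field generated by a root, or by a full $2$-descent; I will only indicate it.

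The argument rests on two ingredients. \textbf{(A) A height machine.} For a rational point $P=(m/e^{2},n/e^{3})$ in lowest terms, set $h(P)=\log\max(|m|,e^{2})$ and $h(O)=0$. I will prove three properties:
\begin{itemize}
	\item for every $M$ there are finitely many $P$ with $h(P)\le M$, which is immediate;
	\item for a fixed $P_{0}$ there is a constant $\kappa_{0}$ with $h(P+P_{0})\le 2h(P)+\kappa_{0}$;
	\item there is a constant $\kappa$ with $h(2P)\ge 4h(P)-\kappa$.
\end{itemize}
The second and third properties come from writing the coordinates of $P+P_{0}$ and $2P$ as explicit rational functions of the coordinates of $P$. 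The third also uses the observation that $x(2P)$ is a quotient of two quartics in $x$ that are coprime as polynomials, so a resultant identity bounds their common factors. A standard lemma then finishes the argument: if $\Gamma$ is an abelian group with such a height and $\Gamma/2\Gamma$ is finite, then $\Gamma$ is generated by coset representatives $Q_{1},\dots,Q_{r}$ of $2\Gamma$ together with the finitely many points of height at most a constant. To see this, write $P=Q_{i_{1}}+2P_{1}$, then $P_{1}=Q_{i_{2}}+2P_{2}$, and so on. Each step gives $h(P_{j+1})\le \tfrac34 h(P_{j})$ as long as $h(P_{j})$ is large, so the heights decrease until they enter the finite set.

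\textbf{(B) Weak Mordell: $\bar{\mathfrak{C}}_{\Q}/2\bar{\mathfrak{C}}_{\Q}$ is finite.} Here I use the $2$-isogeny $\phi\colon \mathfrak{C}\to\bar{\mathfrak{C}}'$, where $\mathfrak{C}'$ is given by $y^{2}=x^{3}-2ax^{2}+(a^{2}-4b)x$ and $\phi(x,y)=(y^{2}/x^{2},\,y(b-x^{2})/x^{2})$. Let $\psi$ be the dual isogeny, so that $\psi\circ\phi$ is multiplication by $2$. Define $\alpha\colon\bar{\mathfrak{C}}'_{\Q}\to\Q^{*}/\Q^{*2}$ by $\alpha(x,y)=x$, with $\alpha(O)=1$ and $\alpha(0,0)=a^{2}-4b$. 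I will check three facts:
\begin{itemize}
	\item $\alpha$ is a homomorphism;
	\item its kernel is $\phi(\bar{\mathfrak{C}}_{\Q})$;
	\item its image lies in the finite subgroup generated by $-1$ and the primes dividing $a^{2}-4b$.
\end{itemize}
The image statement is proved by writing $x=m^{2}\!\cdot\! d/e^{2}$ and noting that $d$ must divide $a^{2}-4b$, which follows from $y^{2}=x(x^{2}-2ax+a^{2}-4b)$ and coprimality. This gives finiteness of $\bar{\mathfrak{C}}'_{\Q}/\phi(\bar{\mathfrak{C}}_{\Q})$, and symmetrically of $\bar{\mathfrak{C}}_{\Q}/\psi(\bar{\mathfrak{C}}'_{\Q})$. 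An elementary index argument then shows that $[\Gamma:2\Gamma]$ is finite, since it is at most the product of the two indices.

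I expect the main obstacles to be two technical points. The first is the lower bound $h(2P)\ge 4h(P)-\kappa$, which needs the resultant argument to control cancellation. The second is the verification that $\alpha$ is a homomorphism with the stated kernel, which requires case checks involving the points $(0,0)$ and $O$. For a survey at the level of this paper, I would present the structure of (A) and (B) and refer to \cite{ST} for these computations.
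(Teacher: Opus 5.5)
Your plan is the argument in the source the paper cites for this theorem (Silverman--Tate, Chapter III): the two height inequalities, the descent lemma, and weak Mordell through the $2$-isogeny $\phi,\psi$ and the map $\alpha$. For curves $y^{2}=x^{3}+ax^{2}+bx$ every ingredient is stated correctly. This includes $\ker\alpha=\phi(\bar{\mathfrak{C}}_{\Q})$, the bound on the image of $\alpha$ by the primes dividing $a^{2}-4b$, and $[\Gamma:2\Gamma]\le[\Gamma:\psi(\Gamma')]\,[\Gamma':\phi(\Gamma)]$.

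One justification needs repair. The substitution $\bar{y}=xy$ in the Weierstrass reduction does not send lines to lines, so it does not literally ``respect the chord--tangent construction.'' The correct reason it induces a group isomorphism is that the chord--tangent law with base point $O$ is the divisor-class law $(P)+(Q)\sim(P+Q)+(O)$. That law is preserved by any isomorphism of smooth projective curves sending $O$ to $O'$, and a birational map between smooth projective curves extends to such an isomorphism.

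The genuine gap is your standing assumption that $x^{3}+bx+c$ has a rational root. The theorem is stated for every smooth cubic with a rational point. For a curve such as $y^{2}=x^{3}+x+1$, the cubic is irreducible over $\Q$, so there is no rational point of order $2$, and hence no $2$-isogeny and no map $\alpha$ defined over $\Q$. Part (A) is unaffected, since the duplication formula and the resultant argument work for any $x^{3}+bx+c$ with $4b^{3}+27c^{2}\neq 0$. What is missing is precisely the finiteness of $\Gamma/2\Gamma$ in general, and this is not a routine repetition of your computation.

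With $\theta$ a root and $K=\Q(\theta)$, the substitute for $\alpha$ is $(x,y)\mapsto x-\theta\in K^{*}/K^{*2}$. It is a homomorphism with kernel $2\Gamma$, and its image consists of classes of even valuation at all primes of $K$ not above $2\Delta$. Showing that this group is finite requires the finiteness of the class group of $K$ and Dirichlet's unit theorem for its ring of integers. Alternatively, one can run a descent through binary quartic forms with fixed invariants, using the classical finiteness of their reduced forms. Your phrase ``the same method over the number field generated by a root'' hides exactly this input. As written, your proposal proves the theorem only for curves with a rational $2$-torsion point; to be fair, that is also the generality in which the cited Silverman--Tate argument is carried out.
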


The proof is not trivial and it can be found in \cite[Section 3.5]{ST}. Thanks to this theorem, we have $\bar{\mathfrak{C}}_{\,\Q} \simeq \Z^{\oplus r} \oplus \Tor\,\bar{\mathfrak{C}}_{\,\Q}$, and the number $r$ is called \emph{rank} of the group.

\begin{Def} The \emph{algebraic rank} of a smooth rational plane cubic $\mathfrak{C}$, with at least one rational point, is the rank of the abelian group $(\bar{\mathfrak{C}}_{\,\Q}, +)$.
\end{Def}

There are known techniques to compute the torsion subgroup. In particular, Mazur's theorem states that is can be $\Z_{n}$, with $1 \leq n \leq 10$ or $n = 12$, or $\Z_{2} \oplus \Z_{2n}$, with $1 \leq n \leq 4$. On the contrary, we do not know how to compute the rank in general. We neither know whether the set of possible ranks is bounded above or not, in spite of a rich and deep research going on about this topic in the last decades.

%%%%%%%%%%%%%%%%%%%%%%%%%%%%%%%%%%%%%%%%%%%%%%%%%%%%%%%%%%%%%%%%%%%%%%%%%%%%%%%%%%%%%%%%%%%%%%%

\section{Elliptic Curves}\label{EllipticCurvesSec}

We are considering rational plane cubic curves whose projective closure $\bar{\mathfrak{C}}$ is smooth and has a fixed rational point $O$. The pair $(\bar{\mathfrak{C}}, O)$ is an \emph{elliptic curve}, as the following temporary definition states.

\begin{Def}[Temporary]\label{DefEllCurveTemp} An \emph{elliptic curve} is a pair $(\mathfrak{P}, O)$, where $\mathfrak{P}$ is a rational smooth cubic curve in the projective plane and $O \in \mathfrak{P}$ is a fixed rational point.
\end{Def}

Since, up to a projective transformation, every projective curve is the closure of a curve in $\R^{2}$, the case $(\bar{\mathfrak{C}}, O)$ considered above is essentially the generic one. Moreover, since the curve is smooth, the topological genus $g$ is related to the degree $d$ by the formula $g = \frac{1}{2}(d-1)(d-2)$; therefore, $d = 3$ if and only if $g = 1$. For this reason, we can replace the expression ``cubic curve'' in definition \ref{DefEllCurveTemp} with the expression ``curve of genus $1$''.

This definition is not satisfactory yet, nor it explains the reason of the word ``elliptic'', since the ellipse is definitely not a cubic. We can find a better definition as follows. First of all, it is not necessary to restrict to plane cubics: we can consider any rational projective curve (embedded in a projective space of any dimension) that is isomorphic to a plane cubic. This is a step forward, but it is not enough yet. Indeed, we can consider a weaker notion of equivalence than isomorphism, that is, \emph{birational equivalence}. Essentially, two curves are birationally equivalent when an open dense subset of the first curve is isomorphic to an open dense subset of the second one. We already saw some examples above: the complement of a point in a circle is isomorphic to a line, thus the circle is birationally equivalent to a line. A birational equivalence can be expressed through a rational function, that is, a quotient of polynomials. Here we are interested in birational equivalences over $\Q$, since they respect rational points. Thus, we give the following definition.

\begin{Def}\label{DefEllCurve} An \emph{elliptic curve} is a pair $(\mathfrak{P}, O)$, where $\mathfrak{P}$ is a rational projective curve that is birationally equivalent (over $\Q$) to a smooth cubic curve in the projective plane, and $O \in \mathfrak{P}$ is a fixed rational point.
\end{Def}

With this definition, a quartic curve of the form
\begin{equation}\label{QuarticElliptic}
	y^{2} = ax^{4} + bx^{3} + cx^{2} + dx + e, \quad a \neq 0
\end{equation}
with a fixed rational point is an elliptic curve if the polynomial $p(x) = ax^{4} + bx^{3} + cx^{2} + dx + e$ has no repeated roots. For example, the quartic $y^{2} = 1 - x^{4}$ is birationally equivalent to the smooth cubic $\tilde{y}^{2} = \tilde{x}^{3} - \frac{3}{8}\tilde{x}^{2} + \frac{1}{16}\tilde{x} - \frac{1}{256}$ through the equivalence $\tilde{x} := -\frac{1}{4} \frac{1}{x - 1}$ and $\tilde{y} := \frac{1}{16}\frac{y}{(x-1)^{2}}$. In general, we can always find a rational equivalence of this kind.

Now we can justify the name ``elliptic'' for these curves. Let us consider the integral providing the perimeter of the ellipse $\frac{x^{2}}{a^{2}} + \frac{y^{2}}{b^{2}} = 1$. Computing the arc in the quadrant $x, y \geq 0$, we get $y = \frac{b}{a}\sqrt{a^{2} - x^{2}}$, thus $y' = -\frac{b}{a}\frac{x}{\sqrt{a^{2} - x^{2}}}$. It follows that the perimeter is given by
\begin{equation}\label{PerimEllipse}
	4\int_{0}^{a} \sqrt{1 + (y')^{2}} \, dx = 4\int_{0}^{a} \sqrt{1 + \frac{b^{2}}{a^{2}} \frac{x^{2}}{a^{2} - x^{2}}} \, dx = 4a \int_{0}^{1} \sqrt{\frac{1-k^{2}t^{2}}{1 - t^{2}}} \, dt,
\end{equation}
where, in the last equality, we set $t := \frac{x}{a}$ and $k := \sqrt{1 - \bigl(\frac{b}{a}\bigr)^{2}}$. Of course, the perimeter of an arc can be obtained by integrating until a fixed value $t_{0} \in [0, 1]$. Let us analyse the function to be integrated, that is, $u = \sqrt{\frac{1-k^{2}t^{2}}{1 - t^{2}}}$. We write it in the form $y = \sqrt{\frac{1-k^{2}x^{2}}{1 - x^{2}}}$. We get $y^{2}(1 - x^{2}) = 1 - k^{2}x^{2}$. Now we consider the birational equivalence $\tilde{x} := x$ and $\tilde{y} := \frac{y}{1 - x^{2}}$. We obtain $\tilde{y}^{2} = (1 - \tilde{x}^{2})(1 - k^{2}\tilde{x}^{2})$, with $k \neq 1$ (since, otherwise, $b = 0$). This is (a part of) a quartic of the form \eqref{QuarticElliptic}, which is an elliptic curve by definition.

We observe that the integral \eqref{PerimEllipse} can be written in the form
	\[4a \int_{0}^{1} \frac{1-k^{2}t^{2}}{\sqrt{(1-k^{2}t^{2})(1 - t^{2})}} \, dt.
\]
In general, we call \emph{elliptic integral} an integral of the form $\int R(x, \sqrt{\varphi(x)}) dx$, where $R$ is a rational function and $\varphi$ is a polynomial of degree $3$ or $4$ with no multiple roots (see \cite{Takebe}). These integrals have been deeply studied because of many interesting applications (arclength of a lemniscate, period of a pendulum, etc.) and a computation similar the one above shows that the integrand function is (a part of) an elliptic curve. This is the reason of the name: elliptic curves are so called because they naturally arise from elliptic integrals, which are called in this way since the first meaningful example is the integral providing the perimeter of an ellipse.

The theory we are briefly introducing here concerns elliptic curves. Actually, we should prove that the abelian group structure constructed above and the tools we will discuss below are invariant under birational equivalences (this is also necessary to put every cubic in Weierstrass normal form without loss of generality). We do now show the details here, but it turns out that this invariance holds.

%%%%%%%%%%%%%%%%%%%%%%%%%%%%%%%%%%%%%%%%%%%%%%%%%%%%%%%%%%%%%%%%%%%%%%%%%%%%%%%%%%%%%%%%%%%%%%%

\section{$L$-Series of an Elliptic Curve}

Let us consider a smooth rational plane cubic $\mathfrak{C}$ expressed in the Weierstrass normal form \eqref{WNForm}. For every prime number $p$, we can project the coefficients to $\Z_{p}$; we get a curve in $\Z_{p}^{2}$, that we call \emph{reduction modulo $p$} of \eqref{WNForm} and we denote by $\mathfrak{C}_{p}$. The projective closure $\bar{\mathfrak{C}}_{p}$, that is well-defined too, is not necessarily smooth, but there exist only finitely many primes $p$ such that $\bar{\mathfrak{C}}_{p}$ is singular, that is, the prime divisors of the discriminant $\Delta := 4b^{3} + 27c^{2}$. In these cases, the corrections to be made are not particularly difficult.

How many points does the smooth curve $\bar{\mathfrak{C}}_{p}$ contain? It is not possible to reply through a simple function of $p$ and of the coefficients of $\mathfrak{C}$, but we can try to estimate this number up to a reasonable error, starting from the following simple lemma.

\begin{Lemma}\label{LemmaHalfSq} In the multiplicative group $\Z_{p}^{*}$, with $p > 2$, half of the elements are squares (that is, there are $\frac{p-1}{2}$ squares).
\end{Lemma}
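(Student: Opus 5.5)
I plan to study the squaring map $\varphi \colon \Z_{p}^{*} \to \Z_{p}^{*}$, $x \mapsto x^{2}$. Since $\Z_{p}$ is a field, $\varphi$ is a group homomorphism, because $(xy)^{2} = x^{2}y^{2}$. The set of nonzero squares is exactly the image of $\varphi$. By the first isomorphism theorem, the image has cardinality $\abs{\Z_{p}^{*}} / \abs{\Ker\,\varphi} = (p-1)/\abs{\Ker\,\varphi}$. So the whole proof reduces to showing that $\Ker\,\varphi$ has exactly two elements.

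For the kernel, suppose $x^{2} = 1$ in $\Z_{p}$. Then $(x-1)(x+1) = 0$. Because $p$ is prime, $\Z_{p}$ has no zero divisors: if $p \mid (x-1)(x+1)$, then $p \mid x-1$ or $p \mid x+1$. Hence $x = 1$ or $x = -1$. These two elements are distinct exactly when $1 \not\equiv_{p} -1$, that is, when $p \nmid 2$, which holds by the hypothesis $p > 2$. Therefore $\abs{\Ker\,\varphi} = 2$, and the number of squares in $\Z_{p}^{*}$ is $\frac{p-1}{2}$.

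If I want to avoid group-theoretic language, there is an elementary alternative. Each square $a \in \Z_{p}^{*}$ has at least the two distinct square roots $\pm x$. It has no others, since $y^{2} = x^{2}$ gives $(y-x)(y+x) = 0$. So the $p-1$ elements of $\Z_{p}^{*}$ split into disjoint pairs $\{x, -x\}$, each pair mapping to a single square, which again yields $\frac{p-1}{2}$ squares.

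The only delicate point is the use of primality, namely the absence of zero divisors, which bounds the number of roots of $x^{2} - a$ by two; the rest is routine counting.
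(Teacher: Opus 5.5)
Your proposal is correct and follows the paper's proof: the squares form the image of the squaring homomorphism on $\Z_{p}^{*}$, whose order is $(p-1)/\abs{\Ker\,\varphi}$ with $\Ker\,\varphi = \{1, -1\}$. The only difference is that you also prove the kernel has exactly these two elements, using the absence of zero divisors and $p > 2$ for $1 \neq -1$, whereas the paper just asserts it.
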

\begin{proof} The subgroup of squares is the image of the group morphism $\varphi \colon \Z_{p}^{*} \to \Z_{p}^{*}$, $a \mapsto a^{2}$, hence it is isomorphic to $\frac{\Z_{p}^{*}}{\Ker\,\varphi}$. Since $\Ker\,\varphi$ contains two elements ($1$ and $-1$), the thesis immediately follows.
\end{proof}

Let us consider the equation of $\mathfrak{C}_{p}$, that is, $y^{2} = x^{3} + ax^{2} + bx + c$ in $\Z_{p}$. We set $p(x) := x^{3} + ax^{2} + bx + c$. Fixing $x_{0} \in \Z_{p}$, we have two possibilities:
\begin{itemize}
	\item If $x_{0}$ is a root of $p$, then $(0, x_{0})$ is the only point of $\mathfrak{C}_{p}$ of the form $(y, x_{0})$.
	\item If $x_{0}$ is not a root of $p$, then, varying $p$, the probability that $p(x_{0})$ is a square is $\frac{1}{2}$ because of lemma \ref{LemmaHalfSq}. If $p(x_{0}) = x_{1}^{2}$, then we have the two points $(\pm x_{1}, x_{0})$ in $\mathfrak{C}_{p}$, otherwise we have no points of the form $(y, x_{0})$. Hence, in average, we have one point for each $x_{0}$ that is not a root of $p$.
\end{itemize}
It follows that, in average, we have one point of $\mathfrak{C}_{p}$ of the form $(y, x_{0})$ for every $x_{0} \in \Z_{p}$, thus we expect that $\mathfrak{C}_{p}$ contains $p$ points. By adding the point at infinity of the $y$-axis, we get $p+1$ points for $\bar{\mathfrak{C}}_{p}$. Because of this argument, we express the cardinality of $\bar{\mathfrak{C}}_{p}$ as follows:
\begin{equation}\label{DefEpsilonP}
	\abs{\bar{\mathfrak{C}}_{p}} = p + 1 - \epsilon_{p},
\end{equation}
where $\epsilon_{p}$ is expected not to be ``too big'' (the minus sign is only a convention). In fact, the following result holds.

\begin{Theorem}[Hasse-Weyl]\label{TheoremHW} The term $\epsilon_{p}$ defined by \eqref{DefEpsilonP} satisfies the inequality
	\[\abs{\epsilon_{p}} \leq 2\sqrt{p}.
\]
\end{Theorem}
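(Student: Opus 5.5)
The plan is to follow the classical Frobenius-endomorphism argument, as in Manin's elementary proof or in Silverman's book. I work over an algebraic closure $\bar{\mathbb{F}}_{p}$ of $\Z_{p}$. The elliptic curve $E := \bar{\mathfrak{C}}_{p}$ is smooth, since $p \nmid \Delta$. By the construction of the previous section, $E(\bar{\mathbb{F}}_{p})$ is an abelian group with zero $O$, the point at infinity of the $y$-axis. I consider the Frobenius map $\phi \colon (x, y) \mapsto (x^{p}, y^{p})$, with $\phi(O) = O$. Its fixed points are exactly the points with coordinates in $\Z_{p}$, so
\[ \abs{\bar{\mathfrak{C}}_{p}} = \#\Ker(1 - \phi). \]
Since the chord-tangent construction is given by rational functions with coefficients in $\Z_{p}$, $\phi$ commutes with the group law and is therefore an endomorphism of $E$. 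Hence so is $m + n\phi$ for all $m, n \in \Z$.

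Next I would introduce the degree of an endomorphism $\alpha$ of $E$, namely the degree of the induced extension of function fields. I would establish three facts.

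\begin{enumerate}
\item[(a)] $\deg$ is a quadratic form on $\textnormal{End}(E)$, i.e.\ $\deg(\alpha + \beta) - \deg\alpha - \deg\beta$ is bilinear. This is the key step. I would prove it via the Weil pairing, or alternatively via divisors, using that $\deg\alpha$ equals $\deg\hat{\alpha}$ where $\alpha\hat{\alpha} = [\deg\alpha]$.
\item[(b)] $\deg\phi = p$, since $\phi$ raises the function field to its $p$-th power.
\item[(c)] $1 - \phi$ is separable, because its differential is $1 - 0 = 1 \neq 0$. Hence $\#\Ker(1 - \phi) = \deg(1 - \phi)$.
\end{enumerate}

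Granting these, the bound follows from positivity. Since $\deg \geq 0$ and $\deg$ is a positive-definite quadratic form, for all integers $m, n$ we get
\[ 0 \leq \deg(m - n\phi) = m^{2} - mn\,a + n^{2}p, \]
where
\[ a := 1 + \deg\phi - \deg(1-\phi) = p + 1 - \abs{\bar{\mathfrak{C}}_{p}} = \epsilon_{p}. \]
A binary quadratic form $m^{2} - amn + pn^{2}$ that is nonnegative on $\Z^{2}$ is nonnegative on $\Q^{2}$ by homogeneity, hence on $\R^{2}$ by continuity. So its discriminant satisfies $a^{2} - 4p \leq 0$, which gives $\abs{\epsilon_{p}} \leq 2\sqrt{p}$.

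The main obstacle is step (a), the quadraticity of the degree, together with the separability count in (c). I would first try the divisor-theoretic route. Show that the dual isogeny satisfies $\widehat{\alpha + \beta} = \hat{\alpha} + \hat{\beta}$, so that $\deg(\alpha+\beta) - \deg\alpha - \deg\beta = \alpha\hat{\beta} + \beta\hat{\alpha}$, which is visibly bilinear. If that machinery is too heavy, the fallback is Manin's elementary proof, which works directly with the Weierstrass equation over the function field $\Z_{p}(t)$ and uses a height function there. Note that $p = 2$ does not fit the Weierstrass form \eqref{WNForm} anyway; there the bound is trivial, since $\abs{\bar{\mathfrak{C}}_{2}} \leq 5$.
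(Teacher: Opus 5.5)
The paper gives no proof of Theorem~\ref{TheoremHW}. It states the result as known and uses it as a black box, so there is no argument in the paper to compare yours with. Your outline is the standard Frobenius-endomorphism proof (as in Silverman, \emph{The Arithmetic of Elliptic Curves}, Theorem V.1.1), and it is correct. The identification $\abs{\bar{\mathfrak{C}}_{p}} = \#\Ker(1-\phi)$, the expansion $\deg(m - n\phi) = m^{2} - \epsilon_{p}mn + pn^{2}$, and the passage from nonnegativity on $\Z^{2}$ to $\epsilon_{p}^{2} \leq 4p$ are all fine. So is your separate treatment of $p = 2$: there $1 \leq \abs{\bar{\mathfrak{C}}_{2}} \leq 5$, so $\abs{\epsilon_{2}} \leq 2$.

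Be clear that the whole weight of the theorem sits in (a) and (c), which you import rather than prove. Those inputs are additivity of the dual isogeny (or the Weil-pairing description of the degree), additivity of pullbacks of the invariant differential, and $\#\Ker\alpha = \deg\alpha$ for separable $\alpha$. Citing them is legitimate, and Manin's elementary argument is the right fallback if you want to avoid them.

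Two small precision points:
\begin{enumerate}
\item[(1)] In (a), bilinearity of $\deg(\alpha+\beta)-\deg\alpha-\deg\beta$ alone does not give your expansion, since you also use $\deg[m] = m^{2}$. This follows once you add the immediate evenness $\deg(-\alpha) = \deg\alpha$, which is part of the usual definition of a quadratic form. On the dual-isogeny route it is automatic from $[\deg\alpha] = \alpha\hat{\alpha}$.
\item[(2)] Your argument covers only good reduction, which is the setting in which the paper introduces \eqref{DefEpsilonP}. The paper also uses $\epsilon_{p}$ for $p \mid \Delta$ in \eqref{LFuncProd}. For odd such $p$ the reduction is a nodal or cuspidal cubic, and a direct count gives $\epsilon_{p} \in \{0, \pm 1\}$, so the bound holds trivially there as well.
\end{enumerate}
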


The set $\{\epsilon_{p}\}_{p \textnormal{ prime}}$ is a meaningful piece of information about the cubic $\mathfrak{C}$. In general, when we have a sequence of complex numbers $\{a_{n}\}_{n \in \N^{*}}$, it is interesting to construct the corresponding \emph{Dirichelet series}, that is, the following analytic function:
	\[f(s) = \sum_{n=1}^{+\infty} \frac{a_{n}}{n^{s}}
\]
A series of this form is useful because it has good additive and multiplicative properties. In many relevant cases, the function $f$ converges in an open subset of the complex plane and it can be extended by analytic continuation to a larger domain. The most famous example is the Riemann zeta function, that is, the Dirichelet series associated to the constant sequence $a_{n} = 1$. We get the series
\begin{equation}\label{RiemannZeta}
	\zeta(s) = \sum_{n=1}^{+\infty} \frac{1}{n^{s}},
\end{equation}
that converges in the domain $\textnormal{Re} \, s > 1$ and can be extended analytically to $\C \setminus \{1\}$. When the sum converges, it is equal to
\begin{equation}\label{RiemannZetaProd}
	\zeta(s) = \prod_{p \textnormal{ prime}} \Bigl(1 - \frac{1}{p^{s}}\Bigr)^{-1},
\end{equation}
and many interesting properties of this function follow from the equality between the two expressions \eqref{RiemannZeta} and \eqref{RiemannZetaProd}.

In the case of a smooth plane cubic, we aim to construct a Dirichelet series from the set $\{\epsilon_{p}\}_{p \textnormal{ prime}}$. Since we know the value of $\epsilon_{p}$ only for $p$ prime, we start from a suitable product similar to \eqref{RiemannZetaProd}. We define the \emph{L-function} of $\mathfrak{C}$ as:
\begin{equation}\label{LFuncProd}
	L_{\mathfrak{C}}(s) := \prod_{p \nmid \Delta} \Bigl(1 - \frac{\epsilon_{p}}{p^{s}} + \frac{1}{p^{2s-1}}\Bigr)^{-1} \cdot \prod_{p \mid \Delta} \Bigl(1 - \frac{\epsilon_{p}}{p^{s}}\Bigr)^{-1}
\end{equation}
Through quite simple computations, one can prove that this product coincides with the Dirichelet series
\begin{equation}\label{LFunc}
	L_{\mathfrak{C}}(s) = \sum_{n=1}^{+\infty} \frac{\epsilon_{n}}{n^{s}},
\end{equation}
where:
\begin{itemize}
	\item if $n = p$ is prime, then $\epsilon_{p}$ in formula \eqref{LFunc} coincides with the one in formulas \eqref{DefEpsilonP} and \eqref{LFuncProd};
	\item if $n = p^{k}$ is a prime power, then the recursive formula $\epsilon_{p^{k+1}} = \epsilon_{p^{k}}\epsilon_{p} - p\epsilon_{p^{k-1}}$ holds;
	\item if $n = p_{1}^{k_{1}} \cdots p_{h}^{k_{h}}$, where $p_{1}, \ldots, p_{h}$ are distinct primes, then $\epsilon_{n} = \epsilon_{p_{1}^{k_{1}}} \cdots \epsilon_{p_{h}^{k_{h}}}$.
\end{itemize}
It follows that the sequence $\{\epsilon_{n}\}_{n \in \N^{*}}$ is an extension of the previously defined sequence $\{\epsilon_{p}\}_{p \textnormal{ prime}}$, and it is a quite natural extension (except for the term $p\epsilon_{p^{k-1}}$ in the recursive formula, which is a little unexpected).

As a consequence of theorem \ref{TheoremHW}, one can prove that the series \eqref{LFunc} converges in the domain $\textnormal{Re} \, s > \frac{3}{2}$. Moreover, it can be extended to a holomorphic function defined in the whole complex plane as follows (see \cite{Liu}). For each prime $p$, the reduction modulo $p$ of $\mathfrak{C}$ can be: (i) smooth; (ii) singular with a \emph{node}, that is, with two distinct tangent lines in the singularity; (ii) singular with a \emph{cusp}, that is, with one tangent line at the singularity. We define $\varphi(p)$ as $0$ in case (i), $1$ in case (ii), and $2$ in case (iii); actually, if $p = 2$ or $3$, then case (iii) require a suitable correction term, which we do not analyse here. We set $N_{\mathfrak{C}} := \prod_{p} p^{\varphi(p)}$; this number, called \emph{conductor} of $\mathfrak{C}$, encodes the kinds of singular reductions of the curve at each prime divisor of the discriminant. Now we define the \emph{completed L-function} as
\begin{equation}\label{ExtendedLambda}
	\Lambda_{\mathfrak{C}}(s) := N^{\frac{s}{2}} (2\pi)^{-s} \Gamma(s) L_{\mathfrak{C}}(s),
\end{equation}
where $\Gamma$ is the usual Gamma function. It is quite easy to express \eqref{ExtendedLambda} through suitable integrals that converge to an entire function. Moreover, we have $\Lambda_{\mathfrak{C}}(s) = \epsilon_{\mathfrak{C}} \cdot \Lambda_{\mathfrak{C}}(2-s)$, where $\epsilon_{\mathfrak{C}} = \pm 1$.

From \eqref{ExtendedLambda}, we easily get $L_{\mathfrak{C}} \colon \C \to \C$ as an entire function. This justifies the following definition.

\begin{Def} The \emph{analytic rank} of a smooth rational plane cubic $\mathfrak{C}$, with at least one rational point, is the order of the holomorphic function $L_{\mathfrak{C}}$ in the point $s = 1$.
\end{Def}

Thus, we say the analytic rank of $\mathfrak{C}$ is $k \in \N$ if, in a neighbourhood of $1$ in $\C$, we have $L_{\mathfrak{C}}(s) = \sum_{n = k}^{+\infty} a_{n}(s-1)^{n}$, where $a_{n} \in \C$ for every $n \geq k$ and $a_{k} \neq 0$. Equivalently, $L_{\mathfrak{C}}(s) = (s-1)^{k}f(s-1)$, where $f$ is a holomorphic function such that $f(0) \neq 0$. It follows that the analytic rank is $0$ if and only if $L_{\mathfrak{C}}(1) \neq 0$, and the higher is the rank, the faster $L_{\mathfrak{C}}(s)$ tends to $0$ when $s$ tends to $1$.

%%%%%%%%%%%%%%%%%%%%%%%%%%%%%%%%%%%%%%%%%%%%%%%%%%%%%%%%%%%%%%%%%%%%%%%%%%%%%%%%%%%%%%%%%%%%%%%

\section{The Conjecture}

We defined the algebraic and the analytic rank of a smooth rational plane cubic. Now we can formulate the conjecture that constitute the main topic of this review.

\begin{BSDConj} For every smooth rational plane cubic $\mathfrak{C}$, with at least one rational point, the algebraic and the analytic rank coincide.
\end{BSDConj}

Let us try to understand why this conjecture was formulated. We know that the function $L_{\mathfrak{C}}(s)$ can be expressed through the product \eqref{LFuncProd} only when $\textnormal{Re} \, s > \frac{3}{2}$, but let us pretend for a moment that this is possible in a neighbourhood of $s = 1$ too. We can safely ignore the finitely many divisors of $\Delta$. We get:
\begin{equation}\label{WrongProduct}
	L_{\mathfrak{C}}(1) \, \textnormal{``}\!=\!\textnormal{''} \prod_{p \textnormal{ prime}} \Bigl(1 - \frac{\epsilon_{p}}{p} + \frac{1}{p}\Bigr)^{-1} = \prod_{p \textnormal{ prime}} \frac{p}{p - \epsilon_{p} + 1} \,\overset{\eqref{DefEpsilonP}}= \prod_{p \textnormal{ prime}} \frac{p}{\abs{\bar{\mathfrak{C}}_{p}}}
\end{equation}
Around 1958, Birch and Swinnerton-Dyer, two young researchers at Cambridge University, could access ones of the most powerful computers of that time. Making plenty of computations, they found ``experimentally'' that, when the algebraic rank of a smooth cubic increases, the cardinality $\abs{\bar{\mathfrak{C}}_{p}}$ tends to become much bigger than the expected average $p+1$, therefore the quotient $\frac{p}{\abs{\bar{\mathfrak{C}}_{p}}}$ becomes very small. A heuristic justification for this observation can be the following one: when the algebraic rank increases, we have plenty of rational points in $\bar{\mathfrak{C}}$, whose reduction modulo $p$ (when it is defined) produces plenty of points in $\bar{\mathfrak{C}}_{p}$. Since the quotient $\frac{p}{\abs{\bar{\mathfrak{C}}_{p}}}$ is the general term of the product \eqref{WrongProduct}, we argue as follows:
\begin{itemize}
	\item If $L_{\mathfrak{C}}(1) \neq 0$, then we are assuming that \eqref{WrongProduct} converges to a non-zero number, thus $\frac{p}{\abs{\bar{\mathfrak{C}}_{p}}} \to 1$ for $p \to +\infty$. Hence, the cardinality of $\bar{\mathfrak{C}}_{p}$ tends to be similar to the average $p+1$. This happens when $\mathfrak{C}$ has a finite number of rational points, since, otherwise, infinite rational points produce plenty of points in $\bar{\mathfrak{C}}_{p}$. Thus, the analytic rank is $0$ if and only if the algebraic rank is $0$.
	\item Assuming both ranks to be positive, if the algebraic rank is a high number, then we have plenty of rational points in $\bar{\mathfrak{C}}$, thus we have plenty of points in $\bar{\mathfrak{C}}_{p}$; therefore, $\frac{p}{\abs{\bar{\mathfrak{C}}_{p}}} \to 0$ quite fast when $p \to +\infty$. By continuity, if $s$ belongs to a sufficiently small neighbourhood $U$ of $1$, then the terms of the product \eqref{LFuncProd} tend to $0$ quite fast for $p \to \infty$, making $L_{\mathfrak{C}}(s)$ very small if $s \in U$. This means that the order of $L_{\mathfrak{C}}$ in $1$ is high.
\end{itemize}
Summarizing, it seems that the algebraic rank vanishes if and only if the analytic rank vanishes and, varying the curve, the algebraic rank increases if and only if the analytic rank increases. We repeat that this is a purely heuristic justification, since it is a quite imprecise argument, based on the wrong equality \eqref{WrongProduct}. Nevertheless, it allows us at least to hope that, by using the correct definition of $L_{\mathfrak{C}}$ through analytic continuation, the conjecture can be proved rigorously.

\SkipTocEntry \subsection{Current Status}

Up to now we are very far from achieving a proof of this conjecture, while we have a reasonably strong numerical evidence that it is true. The results proved up to now are the following ones:
\begin{enumerate}
	\item If a smooth plane cubic has analytic rank $0$ or $1$, then its algebraic rank is respectively $0$ or $1$.
	\item By introducing a suitable measure on the set of smooth plane cubics, Bhargava and Shankar proved that the subset of curves with analytic rank $0$ has positive measure. Hence, the previous item implies that there exists a subset with positive measure formed by curves that satisfy the conjecture.
\end{enumerate}
About the second item, the result stated is quite weak, since the measure of the subset might be very small. It has been proven again by Bhargava and Shankar that the average algebraic rank of all elliptic curves over $\Q$ is less than 1.17; this implies that at least 62.5\% of smooth plane cubics has algebraic rank $0$ or $1$, but we cannot draw any conclusion from this datum, since we only know the implication in item (1), which starts from the analytic side. Actually, we have some partial results about the converse implication in these cases, but under quite strong hypotheses. Therefore, in spite of various decades of high-level research, the conjecture is still widely open.

\SkipTocEntry \subsection{Applications}

Proving the conjecture would be very interesting for many reasons. First of all, we have effective algorithms to compute the analytic rank when it is at most $3$, thus, if the conjecture is true, we get the algebraic rank of these curves too.

Moreover, there are some meaningful problems that can be solved by applying the conjecture. For example, an old problem in number theory is the \emph{congruent number problem} (see \cite{Wiles}). We can state it by asking the following question: for which integral numbers $n$ there exists a right-angled triangle with rational length sides and area equal to $n$? We call $n$ \emph{congruent number} is it satisfies this property. Fibonacci showed that $5$ is congruent and Fermat proved that $1$ is not. One can prove that $n$ is congruent if and only if the smooth plane cubic $\mathfrak{C}_{n} \colon y^{2} = x^{3} - n^{2}x$ has infinite rational points. Assuming the Birch and Swinnerton-Dyer conjecture, this is equivalent to state that the analytic rank of $\mathfrak{C}_{n}$ is positive. In this case, it is possible to prove that every number that is equivalent to $5$, $6$, or $7$ modulo $8$ is congruent and it is possible to find an explicit criterion to establish whether any square-free integer is congruent or not.

For these reasons, and others that we do not mention here, proving the Birch and Swinnerton-Dyer Conjecture would be a breakthrough in contemporary mathematics.

%%%%%%%%%%%%%%%%%%%%%%%%%%%%%%%%%%%%%%%%%%%%%%%%%%%%%%%%%%%%%%%%%%%%%%%%%%%%%%%%%%%%%%%%%%%%%%%

\end{document}